\DeclareFontFamily{U}{matha}{\hyphenchar\font45}
\DeclareFontShape{U}{matha}{m}{n}{
      <5> <6> <7> <8> <9> <10> gen * matha
      <10.95> matha10 <12> <14.4> <17.28> <20.74> <24.88> matha12
      }{}
\DeclareSymbolFont{matha}{U}{matha}{m}{n}
\DeclareFontFamily{U}{mathx}{\hyphenchar\font45}
\DeclareFontShape{U}{mathx}{m}{n}{
      <5> <6> <7> <8> <9> <10>
      <10.95> <12> <14.4> <17.28> <20.74> <24.88>
      mathx10
      }{}
\DeclareSymbolFont{mathx}{U}{mathx}{m}{n}
\DeclareMathDelimiter{\vvvert}{0}{matha}{"7E}{mathx}{"17}
\newcommand{\mel}{\MoveEqLeft}
\newcommand{\p}{\partial}
\newcommand{\la}{\langle}
\newcommand{\ra}{\rangle}
\newcommand{\e}{\epsilon}
\newcommand{\eps}{\epsilon}
\newcommand{\be}{\begin{equation}}
\newcommand{\ba}{\begin{aligned}}
\newcommand{\bee}{\begin{equation*}}
\newcommand{\ee}{\end{equation}}
\newcommand{\ea}{\end{aligned}}
\newcommand{\eee}{\end{equation*}}
\newcommand{\bea}{\begin{equation} \begin{aligned} }
\newcommand{\eea}{\end{aligned}\end{equation}}
\newcommand{\R}{{\mathbf R}}
\DeclareMathOperator{\dist}{dist}
\newcommand{\grad}{\nabla}
\newcommand{\laplace}{\Delta}
\newcommand{\N}{\mathbf{N}}
\renewcommand{\div}{\grad\cdot}
\theoremstyle{plain}
\newtheorem{theorem}{Theorem}[section]
\newtheorem{lemma}[theorem]{Lemma}
\theoremstyle{remark}
\theoremstyle{definition}
\newtheorem{definition}[theorem]{Definition}
\numberwithin{equation}{section}
\renewcommand{\L}{\mathcal{L}}
\begin{document}
\title[Fast diffusion asymptotics on a bounded domain]{Finite-dimensional leading order dynamics for the fast diffusion equation near extinction}
\thanks{MSC 2020 Codes: Primary: 35K55; Secondary: 35B40, 35J61, 35Q79, 37L25, 80A19. \\
}
\date{\today}
\author{Beomjun Choi}
\address{BC: Department of Mathematics, POSTECH, Pohang, Gyeongbuk, South Korea}
\email{bchoi@postech.ac.kr}

\author{Christian Seis}
\address{CS: Institut f\"ur Analysis und Numerik, Universit\"at M\"unster, M\"unster, Germany}
\email{seis@uni-muenster.de}

\begin{abstract}
The fast diffusion equation is analyzed on a bounded domain with Dirichlet boundary conditions, for which solutions are known to extinct in finite time. We construct invariant manifolds that provide a finite-dimensional approximation near the vanishing solution to any prescribed convergence rate.
\end{abstract}
\maketitle  

\section{Introduction}
We study the vanishing behavior of non-negative solutions to the fast diffusion equation on a bounded smooth domain $\Omega$ in $\R^N$ with Dirichlet boundary condition,  that is,  
\begin{equation}\label{20}
\begin{aligned}\partial_\tau w - \laplace w^m&=0 \quad \mbox{in }\Omega,\\
w &=0\quad \mbox{on }\partial \Omega.
\end{aligned}
\end{equation}
For nonlinearity exponents $m\in(0,1)$, the Dirichlet  condition turns the diffusion flux $mw^{m-1}$ singular on the boundary, which has the effect that for bounded solutions the mass escapes entirely in finite time $T>0$ \cite{Sabinina62,Sabinina65,BerrymanHolland80}. 

We will restrict our attention to the Sobolev supercritical range  $m\in (\frac{N-2}{N+2},1)$, and thus, for dimensions $N>2$ we penetrate the so-called very fast diffusion regime $m\in(0,\frac{N-2}N)$. In the latter, certain integrability conditions on the initial data are needed in order to produce bounded solutions. We refer to Chapter 4 of the recent survey paper \cite{BonforteFigalli24} for a discussion. 

  Close to the extinction time $T$,  the dynamics are essentially of separated-variables type, and we consider the accordingly rescaled solutions
\[
w(\tau,x) = \left((1-m)(T-\tau)\right)^{\frac1{1-m}}v^{\frac1m}(t,x),\quad t = \frac{m}{1-m}\log \frac{T}{T-\tau}.
\]
Introducing the reciprocal exponent $p=\frac1m \in (1,\frac{N+2}{N-2})$, the quasi-linearity in the  fast diffusion equation   \eqref{20} is translated  from the spatial derivatives  to  the time derivative, and the resulting equation is reminiscent of the semi-linear heat equation,
\begin{equation}\label{21}
\begin{aligned}
\frac1p \partial_t v^p - \laplace v &= v^p \quad \mbox{in }\Omega,\\
v&=0\quad \mbox{on }\partial \Omega.
\end{aligned}
\end{equation}
It was proved by Berryman and Holland \cite{BerrymanHolland80} that solutions to this rescaled equation approach in infinite time a stationary solution to the semi-linear elliptic equation
\begin{equation}\label{22}
\begin{aligned}
-\laplace V &=V^p  \quad \mbox{in }\Omega,\\
V&=0\quad\mbox{on }\partial \Omega.
\end{aligned}
\end{equation}
In fact, in spite of a possible non-uniqueness of the elliptic problem itself \cite{GidasNiNirenberg79}, Feireisl and Simondon showed that the dynamics \eqref{20} pick a \emph{unique}  solution of \eqref{22}, which is determined by the initial datum $v(t=0)$ (or $w(\tau=0)$, respectively) alone  \cite{FeireislSimondon00}.

Later on, Bonforte, Grillo and V\'azquez \cite{BonforteGrilloVazquez12} proved that the convergence to stationarity proceeds uniformly in the relative error $h = \frac{v-V}V$, that is,
\begin{equation}\label{23}
\lim_{t\to \infty} \|h(t)\|_{L^{\infty}(\Omega)}=0.
\end{equation}
Moreover, they provide sub-optimal estimates on the rate of convergence in the entropy sense for exponents $p$ close to $1$. Notice that the relative error is a natural quantity to consider because it divides out the degeneracy at the boundary, which is enforced by the Dirichlet conditions.  Instead, as we will see in \eqref{eq-relativeerror}, the degeneracy is transferred into the  linear operator.

Establishing the sharp rate of convergence has been a problem of considerable interest for many years, and it was only very recently (partially) successfully  settled. Indeed, Bonforte and Figalli \cite{bonforte2021sharp} compute the sharp rates of convergence ---  at least for $C^{2,\alpha}$ generic domains including the ball, and the authors of the present paper together with McCann \cite{ChoiMcCannSeis22} extend the results (using a completely different approach)  by formulating a dichotomy: Either the relative error decays at least exponentially  in time with rate given by the smallest stable eigenvalue, or the decay is algebraically  $O(1/t)$ or slower. In fact, exponential convergence can be established if the limiting solution $V$ is \emph{ordinary}, that is, if the manifold generated by all (weak) solutions of \eqref{22} has near $V$ the dimension of the kernel of the linear operator $\L $ associated to the dynamics of $h$ (see below), and if the  error relative to $V$ embeds this manifold differentiably into $L^{p+1}(\Omega)$, which is the Lebesgue space into which $W^{1,2}(\Omega)$ embeds compactly precisely if $p < \frac{N+2}{N-2}$. Notice that the critical exponent $\frac{N+2}{N-2}$ distinguishes good from bad behavior in the treatment of the semi-linear elliptic equation \eqref{22}, the rescaled equation \eqref{21} or the semi-linear heat equation. Again, we refer to \cite{BonforteFigalli24} for more information.

To sensitize the reader to  difficulties that had to one has to face when studying the convergence rates for the large-time dynamics \eqref{23}, we consider the evolution equation for the relative error, 
  \bea \label{eq-relativeerror}  \p_t h +  \L h =  \mathcal{M} (h)\quad \mbox{in }\Omega,\eea 
 where $\L $ is the linear operator relative to $V$, that is,
 \begin{equation}\label{eqL}
 \begin{aligned}
 \L  h &= -\frac{1}{ V^{p}} \Delta (hV) - p h\\
& = -V^{1-p}\laplace h - 2V^{-p} \grad V\cdot \grad h - \left({p-1}\right)h\\
& = -V^{-1-p}\div(V^2 \grad h) - \left({p-1}\right)h,
 \end{aligned}
 \end{equation}
{and $\mathcal{M} (h)$ is the nonlinearity, given by}
\begin{equation} \ba 
\label{32a}
 \mathcal{M } (h) &= \frac 1{(1+h)^{p-1}} \left((1+h)^p-1-ph\right) + \left(1- \frac1{(1+h)^{p-1}} \right)L h. \\
&=\frac {\left((1+h)^p-1-ph\right) }{(1+h)^{p-1}} - V^{-1-p} \nabla \cdot (V^2 (1-\frac{1}{(1+h)^{p-1}}) \nabla h )\\
&+ (p-1) \frac{V^{1-p}}{{(1+h)}^p} |\nabla h|^2 -(p-1) (1-\frac{1}{(1+h)^{p-1}} )h   
.\ea \end{equation}
 
Observe that we may write $\mathcal{M} (h)$ in a different form $\mathcal{N}(h)$, which involves $h$ and $\partial _t h$, i.e., for any solution $h$ to \eqref{eq-relativeerror}, it holds that
 \bea \label{eq-problemrelative}
 \mathcal{N} (h) &:= (1+h)^p - 1 - p h  + \big(1-(1+ h)^{p-1} \big) {p} \partial_t h = \mathcal{M} (h) 
 .\eea
Notice that the linear operator $\L $ is self-adjoint in the weighted Lebesgue inner product generating $L^2_{p+1}:=L^2(\Omega,V^{p+1}dx)$, and it is thus invertible \emph{without} imposing boundary conditions on $\partial \Omega$, see also
 Theorem \ref{thm-truncated} below.
 
It is readily checked that  $\L $ admits the  negative (unstable) eigenvalue $1-p$, which corresponds to constant eigenfunctions.  The occurrence of a negative eigenvalue and thus the presence of potentially unstable modes rules out  soft  arguments that are nowadays standard in the derivation of  equilibration rates. Moreover, if $\L $ has a non-trivial kernel, neutral (zero) modes may potentially effect the decay behavior.

The strategy of Bonforte and Figalli to partially overcome these challenges relies on proving that the unstable modes corresponding to negative eigenvalues cannot be active during the evolution in light of Feireisl and Simondon's convergence \cite{FeireislSimondon00} and of Bonforte, Grillo, and V\'azquez' improvement \cite{BonforteGrilloVazquez12}. Focusing on generic domains on which the neutral  modes are absent  \cite{SautTemam79}, Bonforte and Figalli are able to show that the relative error is uniformly decaying with a rate $\lambda$ not smaller than the first positive eigenvalue,
\begin{equation}\label{24}
\|h(t)\|_{L^{\infty}} \lesssim e^{-\lambda t}\quad \mbox{as }t\to \infty.
\end{equation}
Their derivation was subsequently simplified by Agaki \cite{Akagi21+}, though the comparison of his bounds in terms of an energy rather than   the  relative error exploits Jin and Xiong's recent boundary regularity theory \cite{JinXiong22+}.
On arbitrary smooth domains, however, the kernel is not necessarily trivial,  hence the limiting stationary solution is in general not  isolated, and thus, the proposed method is not applicable.

Finally, Jin and Xiong established unconditionally that the convergence is at least algebraic (though with an non-explicit power) \cite{JinXiong20+}. 

In \cite{ChoiMcCannSeis22}, in which the authors with McCann bridge the wide gap between upper exponential  bounds and algebraic lower bounds on the rate of convergence, a different approach is developed. It reduces the derivation of decay estimates to the analysis of a dynamical system that fits into the setting of Merle and Zaag's ODE lemma \cite{MerleZaag98}. Exploiting the uniform convergence in \eqref{23}, the lemma implies that the dynamics are eventually dominated either by the neutral or by the stable modes. The main (technical) work is spent in \cite{ChoiMcCannSeis22} to obtain new smoothing estimates, that allow to uniformly control  temporal and (tangential) spatial derivatives of the solution at positive times by a weighted $L^2$ norm of the initial datum.

If a solution decays exponentially, apart from providing the leading order estimates on the decay of the relative error \eqref{24}, it is showed in \cite{ChoiMcCannSeis22} that the large-time behavior is described by the linear dynamics up to an error of the order $e^{-2\lambda t}$ which is generated by the quadratic behavior of $N(h)$ in \eqref{eq-problemrelative} near the limit $h=0$. More precisely, if the solution is decaying at rate $\lambda$ \eqref{24}, it can be approximated by an eigenmode expansion corresponding to the eigenvalues in the interval $(\lambda, 2\lambda)$. If a solution decays non-exponentially, it is eventually dominated by the neutral mode and the dynamics of dominating modes becomes non-linear in an essential way. No precise asymptotics, such as the rate of convergence and the dominating profile, is known except of the lower and upper algebraic bounds  provided by \cite{ChoiHaslhoferHershkovits18+} and \cite{JinXiong22+}, respectively. The problem of determining the exact algebraic or near-algebraic decay behavior in general setting is still unsettled for equations with gradient flow structures. In  recent work \cite{choi2023asymptotics}, the first named author and Hung address a higher order asymptotics for a class of equations algebraically converging to smooth compact limits without boundary.

The goal of the present paper is to further refine the information on the large time dynamics. Our main result provides an unconditional convergence of  the decaying solution \eqref{23} towards an approximate solution that lives on a  finite-dimensional  manifold.  More precisely,  denoting the (discrete \cite[Lemma 2.1]{bonforte2021sharp}) spectrum of $\L $ by $\sigma(\L )= \{ \lambda_k \,:\, k \in \mathbb{N}\}$ and assuming that the eigenvalues $\lambda_k$ are strictly increasing, we introduce  for a given integer $K\in \mathbb{N}$ the eigenspace
\be\ba  E_c &= \mathrm{span} \{ \text{eigenfunctions with }\lambda_k \le \lambda_K \} \ea \ee 
 and consider the orthogonal decomposition 
\be L^2_{p+1} = E_c \oplus E_s ,\ee
so that  $E_s$ is spanned by the eigenfunctions with $\lambda_k >\lambda_K$. Let $\lambda_-\in (\lambda_K,\lambda_{K+1})$ be fixed.

\begin{theorem} [Finite dimensional approximation] \label{thm-finiteapprox} Let $h(t)$ be a solution to \eqref{eq-relativeerror}  that converges to $0$ as $t\to \infty$ in $L^\infty (\Omega)$. For every $\eps \in(0,\eps_0)$, there exist a Lipschitz continuous function $\theta_{\eps}: E_c\to E_s$ with $\theta_{\eps}(0)=0$,  a time $t_0>0$ and an evolution $\tilde h_{\eps}(t)\in \mbox{graph}_{E_c}\theta_{\eps} $   for $t\ge t_0$ such that
\[\Vert h(t)-\tilde h_\eps  (t)\Vert_{L^2_{p+1}} \lesssim   e^{-\lambda_- (t-t_0)} \text{ for all } t\ge t_0 .\]
Moreover, $\tilde h_{\eps}(t)$ is a solution to \eqref{eq-relativeerror} provided that
\begin{equation}\label{100}
\|\tilde h(t)\|_{L^{\infty}} \le \eps,\quad \|V\grad\tilde h(t)\|_{L^{\infty}}\le \eps.
\end{equation}
\end{theorem}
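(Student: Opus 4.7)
The plan is to apply a classical Lyapunov--Perron construction to produce the Lipschitz graph $\theta_\eps$ and then to establish an asymptotic phase (tracking) property that selects a shadow solution $\tilde h_\eps$ on the manifold for the given decaying $h$. I would first decompose \eqref{eq-relativeerror} along $L^2_{p+1}=E_c\oplus E_s$ via the orthogonal projections $P_c,P_s$; setting $u=P_c h$ and $v=P_s h$ yields the coupled system
\[
\partial_t u + \L u = P_c M(u+v),\qquad \partial_t v + \L v = P_s M(u+v),
\]
with linear propagators controlled sharply by the spectral gap $\lambda_K<\lambda_-<\lambda_{K+1}$.

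Since no global construction can close for large data, I would replace $M$ by a truncation $M^\eps$ that coincides with $M$ on the set defined by \eqref{100} and is globally Lipschitz with small constant $O(\eps)$ in a norm that also tracks $\|V\grad h\|_{L^\infty}$; this is possible thanks to the quadratic vanishing of $M$ near $h=0$ visible in \eqref{32a}. For each $u_0\in E_c$ I would then carry out the Lyapunov--Perron fixed-point iteration in the Banach space of trajectories $(u,v):[t_0,\infty)\to E_c\oplus E_s$ with $\sup_t e^{\mu(t-t_0)}\|(u,v)(t)\|_{L^2_{p+1}}<\infty$, for some $\mu\in(\lambda_K,\lambda_-)$: the component $u$ is integrated forward in time starting from $u(t_0)=u_0$, while $v$ is determined by the Duhamel integral on $E_s$ afforded by the spectral gap. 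The sharp semigroup bounds together with the small Lipschitz constant of $M^\eps$ yield a contraction and hence a unique fixed point $(u_*(t;u_0),v_*(t;u_0))$; defining $\theta_\eps(u_0):=v_*(t_0;u_0)$ produces a Lipschitz function with $\theta_\eps(0)=0$ whose graph is invariant under the truncated flow.

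For the tracking, I would use the smoothing estimates of \cite{ChoiMcCannSeis22} to choose $t_0$ so large that $\|h(t)\|_{L^\infty}+\|V\grad h(t)\|_{L^\infty}\le \eps$ for $t\ge t_0$, so that $h$ also solves the truncated equation on this interval. The shadow is then sought in the form $\tilde h_\eps(t)=u_*(t;\hat u_0)+\theta_\eps(u_*(t;\hat u_0))$ and selected by tuning its initial center datum $\hat u_0\in E_c$: the error $e=h-\tilde h_\eps$ satisfies a linear equation with coefficients of size $O(\eps)$, so its stable projection $P_s e$ decays at a rate exceeding $\lambda_-$ as soon as $P_c e$ does; imposing this last property is a $K$-dimensional matching condition on $\hat u_0$ solvable either by a secondary contraction in the weighted norm $e^{\lambda_-(t-t_0)}\|e(t)\|_{L^2_{p+1}}$ or, for $\eps$ small, by the implicit function theorem, which produces the announced exponential bound.

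The principal obstacle is the quasi-linear structure of $M$: the term $V^{-1-p}\grad\cdot(V^2(1-(1+h)^{1-p})\grad h)$ in \eqref{32a} is not controlled by $\|h\|_{L^2_{p+1}}$ alone, so the contraction must be closed in a finer norm that simultaneously tracks $\|V\grad h\|_{L^\infty}$. This is both the reason for the second smallness condition in \eqref{100} and the source of the essential dependence on the parabolic smoothing and boundary regularity of \cite{ChoiMcCannSeis22,JinXiong22+}, which supply the upgrade from $L^2_{p+1}$ to the stronger norms needed by the contraction and, simultaneously, place any decaying solution into the regime \eqref{100} from some $t_0$ onward.
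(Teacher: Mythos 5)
Your overall strategy -- truncate the nonlinearity so that it becomes globally Lipschitz with constant $O(\eps)$, build the center manifold by a Lyapunov--Perron fixed point, and select the shadow solution by an asymptotic-phase/matching argument -- is the same as the paper's, which implements it via Koch's time-discretization (time-one maps $S^\eps=L+R^\eps$, a fixed point $J$ on two-sided sequences for the center manifold, a second fixed point $I_{g,g_s}$ giving the stable foliation, and the unique intersection $W^\eps_c\cap M_{h_0}$ as the shadow). Your ``secondary contraction in the weighted norm $e^{\lambda_-(t-t_0)}\|e(t)\|$'' is precisely the stable-foliation fixed point. However, there is a concrete problem with the function space you propose for the center-manifold iteration: trajectories on $[t_0,\infty)$ with $\sup_t e^{\mu(t-t_0)}\|(u,v)(t)\|<\infty$ and $\mu>\lambda_K$ exclude exactly the center dynamics you are trying to capture (for $\lambda_K=0$ the neutral modes need not decay at all), and the Duhamel representation of the stable component on a center manifold integrates from $-\infty$, so it is not available for forward-only trajectories. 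The correct space consists of \emph{eternal} orbits with two-sided exponential growth bounds -- growth at most $e^{\lambda_- |t|}$ backward and at most $e^{-\lambda_+ t}$ forward -- which is the paper's $\ell_{\Lambda_-,\Lambda_+}$; as written, your fixed point would not produce the graph $\theta_\eps$.

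Second, your proposed resolution of the quasilinear difficulty -- closing the contraction ``in a finer norm that simultaneously tracks $\|V\grad h\|_{L^\infty}$'' -- is not how the paper proceeds and would require exactly the maximal-regularity/$L^q$ estimates that the paper states are not available. Instead, the gradient-dependent coefficient is truncated pointwise via $\eta^\eps_1[h]=\eta(h/\eps)\prod_i\eta(V\p_i h/\eps)$, after which the divergence-form term has a coefficient bounded by $C\eps$ and the whole well-posedness and contraction argument closes in the $L^2$-based spaces $L^\infty L^2_{p+1}\cap L^2H^1_2$ (using the Hardy inequality \eqref{eq-hardy}). The price is that the constructed $\tilde h_\eps$ solves the \emph{truncated} equation unconditionally and the original equation only under \eqref{100} -- which is precisely why the theorem's ``Moreover'' clause is conditional. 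Your use of the smoothing estimates of \cite{ChoiMcCannSeis22} to place the given solution $h$ itself into the regime \eqref{100} after a waiting time is correct and matches the paper's Lemma \ref{L11}.
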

In order to understand the meaning of the second estimate in \eqref{100}, we recall that the  decay  of the stationary solution towards the domain boundary is essentially linear, that is, there is a constant $C$ such that
\begin{equation}
\label{110}
\frac1C\dist(x,\partial \Omega) \le V(x) \le C\dist(x,\partial \Omega),
\end{equation}
for any $x\in \Omega$, see, for instance, Theorem 1.1 in \cite{DiBenedettoKwongVespri91} or Theorem 5.9 in \cite{BonforteGrilloVazquez13}. Hence, $\tilde h$ is a solution to the original equation \eqref{eq-relativeerror} if its amplitude is small and if it is not growing too much towards the boundary. The smallness in amplitude applies (at least, after a waiting time) thanks to the convergence in \eqref{23}. In Lemma \ref{L11} below, we will recall that also the second estimate is eventually satisfied.

In the construction of $\theta_{\eps}$ below we will see that the \emph{center manifold}
\[
W^c_{\eps} = \mbox{graph}_{E_c}\theta_{\eps}
\]
is invariant under the evolution of a suitably truncated version of the dynamics \eqref{eq-relativeerror}. The truncation is of the kind that it is inactive for solutions to the original equation after a certain waiting time, provided that the solution is uniformly small in the sense of \eqref{100}, cf.~Lemma \ref{L11} below. The need of the truncation in the formulation of the theorem is a consequence of   a lack of suitable $L^q$ regularity estimates for the linear version of \eqref{eq-relativeerror} that we believe to hold true but whose proof is beyond the scope of the present paper. Nonetheless, the statement contributes to our understanding of the large-time asymptotics of the relative error. Choosing $\lambda_K=0$, we have the following dichotomy according to our earlier findings \cite{ChoiMcCannSeis22}: If the solution vanishes at an  exponential rate, the approximate solution is trivial, $\tilde h=0$. Otherwise, if the solution is at most algebraically decaying,  there is  {an approximate} solution in the finite-dimensional space that describes the given solution up to an exponentially decaying error.  It is the restriction to the finite-dimensional manifold that we consider as the major progress provided in this contribution.

In principle, that choice of $K$ in the statement of the theorem is arbitrary, and we could also choose $\lambda_K>0$. In this case, we could gain some information on the higher-order asymptotics similar to, e.g., \cite{Seis15+,SeisWinkler22+}. Because in the general setting under consideration eigenfunctions are not explicitly known, pursuing this direction  is a rather abstract maneuver that we prefer to avoid. 

\emph{The paper is organized as follows:} In Section \ref{S2}, we introduce the truncated model and discuss its well-posedness. Section \ref{S3} then provides the construction of the invariant manifolds and the proof of Theorem \ref{thm-finiteapprox}.

\section{Truncated problem}\label{S2}
The construction of invariant manifolds necessitates the availability of a semi-flow that is defined \emph{globally} on the underlying Banach space. It is thus customary to suitably truncate the nonlinearity in \eqref{eq-relativeerror}. We will do it in such a way that the divergence structure of the leading order term in partially retained. 

For a standard cut-off function $\eta: \mathbb{R} \to \mathbb{R}_{\ge 0}$ that has a support on the interval $[-2,2]$ and satisfies $\eta=1$ on the smaller interval $[-1,1]$ and a positive real $\eps$,  let us introduce the two truncations  
 \be \label{eq-trunccutoff0}\eta^\e_0=\eta^\e _0[h]:= \eta(\frac{h}{\e})\ee  and \be \label{eq-trunccutoff1}\eta^\e_1=\eta^\e _1 [h]:= \eta (\frac{h}{\e}) \eta (\frac{V \p_1 h}{\e}) \cdots  \eta (\frac{V \p_N h}{\e}).\ee 
With that we consider our evolutionary problem with  a truncated nonlinearity,
\be\label{eq-trunc} \partial_t h + \L  h= \mathcal{M}^\e  [h],\ee
where we have set
\be\ba  \label{eq-Mtrunc} \mathcal{M}^\e   [h]& =  \eta^\e_0\left[ \frac {\left((1+h)^p-1-ph\right) }{(1+h)^{p-1}}-(p-1) (1-\frac{1}{(1+h)^{p-1}} )h\right] \\
& - V^{-1-p} \nabla \cdot (V^2  (1-\frac{1}{(1+h)^{p-1}})\eta^\e _1 \nabla h ) + (p-1) \eta^\e _1 \frac{V^{1-p}}{{(1+h)}^p} |\nabla h|^2 .\ea \ee  
The choice of our truncations guarantees that for functions that are small in amplitude, $|h|\le \eps$ and which are at most mildly increasing towards the domain boundary in the sense $V|\grad h|\le \eps $, it holds that $\mathcal{M}^{\e} [h] = \mathcal{M} [h]$, and thus, the original equation \eqref{eq-relativeerror} and its truncated variants \eqref{eq-trunc} are both equivalent. This observation is the origin of \eqref{100}.

Before turning to the analysis of the truncated equation, we remark that our decaying solution \eqref{24} to the original equation indeed  solves the truncated equation after a possible waiting time.

\begin{lemma}\label{L11}
For any $\eps>0$, there exists a constant $\eps_*\le \eps$ with the following property: If $h$ is a solution to the original equation \eqref{eq-relativeerror} with $\|h(t)\|_{L^{\infty}}\le \eps_*$ for all $t\ge 0$, then
\[
\|V\grad h \|_{L^{\infty}}\le \eps,
\]
for all $t\ge 1$.
\end{lemma}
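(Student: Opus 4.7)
The plan is to reduce the weighted gradient bound on $h$ to a boundary regularity statement for $v$ itself, and then to obtain smallness by interpolation from the uniform $L^\infty$ smallness of the relative error.

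First, I would unwind the definition $v = V(1+h)$, which gives the identity
\[
V \nabla h \;=\; \nabla v - (1+h)\nabla V \;=\; \nabla(v-V) - h\,\nabla V.
\]
Elliptic regularity applied to \eqref{22}, combined with the Hopf-type boundary behavior \eqref{110}, yields $V\in C^{1,\alpha}(\overline\Omega)$, so $\nabla V$ is uniformly bounded. Since $\|h\|_{L^\infty}\le \eps_*$, the term $h\,\nabla V$ is already of order $\eps_*$, and the lemma is reduced to the bound $\|\nabla(v-V)\|_{L^\infty(\Omega)}\lesssim \eps$ for $t\ge 1$. Moreover, the hypothesis $\|h\|_{L^\infty}\le\eps_*$ and \eqref{110} immediately give the zeroth-order smallness $\|v-V\|_{L^\infty(\Omega)} \lesssim \eps_*$ uniformly in $t\ge 0$.

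Second, I would invoke the boundary regularity theory of Jin and Xiong \cite{JinXiong22+} for the rescaled equation \eqref{21}. It provides, for $t\ge 1/2$ (thanks to the parabolic smoothing effect), a uniform estimate
\[
\|v(t,\cdot)\|_{C^{1,\alpha}(\overline\Omega)} \;\le\; M,
\]
where $M$ depends only on $\Omega$, $p$, and a bound on $\|v\|_{L^\infty}$, the latter being controlled via $v = V(1+h)$ and $\|h\|_{L^\infty}\le \eps_*\le \eps$. Since $V$ itself belongs to $C^{1,\alpha}(\overline\Omega)$ with norm depending only on $\Omega$ and $p$, we conclude
\[
\|v-V\|_{C^{1,\alpha}(\overline\Omega)} \;\le\; M'
\]
for all $t\ge 1$, with $M'$ independent of $t$ and $\eps_*$.

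Third, a standard Hölder interpolation in $\overline\Omega$ gives
\[
\|\nabla(v-V)\|_{L^\infty(\Omega)} \;\lesssim\; \|v-V\|_{L^\infty(\Omega)}^{\alpha/(1+\alpha)} \, \|v-V\|_{C^{1,\alpha}(\overline\Omega)}^{1/(1+\alpha)} \;\lesssim\; \eps_*^{\alpha/(1+\alpha)}\, (M')^{1/(1+\alpha)}.
\]
Combined with the first-step identity, $\|V\nabla h\|_{L^\infty}\lesssim \eps_*^{\alpha/(1+\alpha)}$ up to a multiplicative constant depending only on structural quantities, so choosing $\eps_*\le \eps$ small enough yields the desired bound $\|V\nabla h\|_{L^\infty}\le \eps$ for all $t\ge 1$.

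The main obstacle is the uniform up-to-the-boundary $C^{1,\alpha}$ estimate used in the second step: the flux $mw^{m-1}\nabla w$ is singular at $\partial\Omega$, and naive Schauder or De Giorgi–Nash arguments do not apply. This is precisely the content of \cite{JinXiong22+}, which is used here as a black box; any attempt to circumvent it would essentially require reproving a substantial part of that boundary regularity theory for the rescaled equation \eqref{21}.
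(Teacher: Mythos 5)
Your proof is correct in outline, but it takes a genuinely different route from the paper's. The paper disposes of this lemma by citing its own Corollary 5.12 in \cite{ChoiMcCannSeis22}, which is a smoothing estimate controlling the weighted gradient $V\nabla h$ near the boundary directly (obtained there by flattening the boundary and running weighted energy/De Giorgi-type arguments), combined with standard interior parabolic estimates; smallness of $\|V\nabla h\|_{L^\infty}$ then follows because that corollary bounds the gradient at time $t$ by a norm of $h$ at earlier times, which is small by hypothesis. You instead reduce the weighted bound to the unweighted identity $V\nabla h=\nabla(v-V)-h\nabla V$ and import the up-to-the-boundary $C^{1,\alpha}$ regularity of $v$ from \cite{JinXiong22+} as a black box, then transfer $L^\infty$-smallness to gradient-smallness by H\"older interpolation. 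Both schemes are of the type ``zeroth-order smallness plus uniform higher-order bound implies first-order smallness,'' and both outsource the hard boundary regularity; yours uses a heavier black box (full $C^{1,\alpha}(\overline\Omega)$ control of $v$, which is strictly more than the weighted gradient bound actually needed) but makes the smallness transfer completely transparent. One imprecision to fix: the Jin--Xiong estimates require the \emph{two-sided} nondegeneracy $c_1\dist(x,\partial\Omega)\le v\le c_2\dist(x,\partial\Omega)$, not merely an upper bound on $\|v\|_{L^\infty}$; this is harmless here since $\|h\|_{L^\infty}\le\eps_*\le\tfrac12$ together with \eqref{110} gives $\tfrac12 V\le v\le 2V$ and hence the required Harnack-type bounds, but you should state that the constant $M$ depends on these and verify they are supplied by the hypothesis.
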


In view of \eqref{24}, the smallness condition on $h$ in the hypothesis of the lemma is always true for sufficiently large times.

\begin{proof}
The estimate in the statement of the lemma was essentially already established in our previous work \cite{ChoiMcCannSeis22}. Indeed, \cite[Corollary 5.12]{ChoiMcCannSeis22} provides the estimates near the boundary (after straightening the boundary with the help of a diffeomorphism), while estimates in the interior result from standard parabolic estimates. 
\end{proof}

We start now by discussing the well-posedness of the truncated problem.

\begin{theorem}\label{thm-truncated}There exists $\e_0 >0$ such that for all $0<\e \le \e_0$ and $h_0 \in L^2_{p+1}$, there exists a unique weak solution $h_\e \in C^0_{loc} (L^2_{p+1}) \cap L_{loc}^2 (H^1_2)$ to the truncated problem \be \label{eq-trunc-eq} \begin{cases} \ba  \p_t h+ \L h & = \mathcal{M}^\e  [h],  \\ h(0)&=h_0 .
 \ea \end{cases} \ee  
Moreover, there holds a stability estimate
\bea \label{eq-stability}
\sup_{[0,T]} \Vert h-\tilde h\Vert_{L^2_{p+1}}^2 + \int_0^T \Vert \nabla h& -\nabla \tilde h \Vert_{L_2^2}^2\, dt  \le e^{CT} \Vert h_0 -\tilde h_0\Vert^2_{L^2_{p+1}} , \eea
for $h(t)$ and $\tilde h(t)$ solutions to \eqref{eq-trunc-eq} with initial datum $h_0$ and $\tilde h_0$, respectively.
\end{theorem}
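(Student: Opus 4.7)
The plan is to produce $h_\eps$ by a Galerkin approximation based on the eigenbasis of $\L$, then pass to the limit using compactness, and finally deduce uniqueness from the stability estimate itself. The key feature that makes everything work is that the cutoffs $\eta_0^\eps$ and $\eta_1^\eps$ render every coefficient appearing in $\mathcal{M}^\eps$ of size $O(\eps)$, which allows the nonlinearity to be dominated by the coercive part of $\L$ once $\eps\le\eps_0$ is small enough.

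First I would fix the functional framework: $L^2_{p+1}$ as pivot space and $H^1_2$ defined as the completion of $C^\infty_c(\overline\Omega)$ under $\|h\|_{L^2_{p+1}}^2+\int_\Omega V^2|\grad h|^2\,dx$. The Dirichlet form associated with $\L$ reads
\[
B(h,g)=\int_\Omega V^2\grad h\cdot\grad g\,dx-(p-1)\int_\Omega hg\,V^{p+1}\,dx,
\]
so that $\L+p\,\mathrm{Id}$ is positive and coercive on $H^1_2$; by the compactness of the weighted embedding $H^1_2\hookrightarrow L^2_{p+1}$ (already implicit in the spectral decomposition used earlier), $\L$ has compact resolvent with an orthonormal basis of eigenfunctions $\{\psi_k\}\subset H^1_2$. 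I would look for approximate solutions $h_n(t)=\sum_{k\le n}c_k^n(t)\psi_k$ satisfying the Galerkin projection of \eqref{eq-trunc-eq}. Since $\eta_0^\eps[h]$ truncates $h$ at level $2\eps$ and $\eta_1^\eps[h]$ truncates $V\grad h$ likewise, the right-hand side is globally Lipschitz in $(c_1^n,\dots,c_n^n)$, so the finite-dimensional ODE system has a unique globally defined solution.

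Next come the uniform a priori estimates. Testing the Galerkin equation against $h_n$ in $L^2_{p+1}$ and integrating by parts in the divergence term yields
\[
\tfrac12\tfrac{d}{dt}\|h_n\|_{L^2_{p+1}}^2+\int_\Omega V^2|\grad h_n|^2\,dx-(p-1)\|h_n\|_{L^2_{p+1}}^2=\langle \mathcal{M}^\eps[h_n],h_n\rangle_{L^2_{p+1}}.
\]
The crucial observation is that $\bigl|1-(1+h)^{1-p}\bigr|\eta_1^\eps[h]\le C\eps$ and $\eta_1^\eps[h]V|\grad h|\le C\eps$ by construction, so the quadratic-in-gradient contributions of $\mathcal{M}^\eps$ can be absorbed into $\tfrac12\int V^2|\grad h_n|^2\,dx$ once $\eps\le\eps_0$, while the remaining algebraic terms are controlled by $\|h_n\|_{L^2_{p+1}}^2$. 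Gronwall supplies uniform $L^\infty(0,T;L^2_{p+1})\cap L^2(0,T;H^1_2)$ bounds, together with a bound on $\partial_t h_n$ in $(H^1_2)^*$. Aubin--Lions then produces a subsequence converging strongly in $L^2_{loc}(0,\infty;L^2_{p+1})$ and a.e., which suffices to pass to the limit in $\mathcal{M}^\eps$ since all integrands are uniformly bounded by $C\eps$ and $\eta_0^\eps,\eta_1^\eps$ are continuous functions of $h$ and $V\grad h$ respectively.

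For the stability estimate \eqref{eq-stability}, which in particular implies uniqueness, I would test the difference equation for $\phi:=h-\tilde h$ against $\phi$ in $L^2_{p+1}$. The heart of the argument is the Lipschitz bound
\[
\bigl|\langle \mathcal{M}^\eps[h]-\mathcal{M}^\eps[\tilde h],\phi\rangle_{L^2_{p+1}}\bigr|\le \tfrac12\int_\Omega V^2|\grad\phi|^2\,dx+C\|\phi\|_{L^2_{p+1}}^2,
\]
after which the gradient term is absorbed into the coercive part of $\L$ and Gronwall closes the estimate. The main obstacle I anticipate is proving exactly this Lipschitz bound for the divergence term of $\mathcal{M}^\eps$: the dependence on $h$ enters simultaneously through the algebraic factor $1-(1+h)^{1-p}$, through $\eta_1^\eps[h]$ (which itself depends on $V\grad h$), and through $\grad h$. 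One must split the difference $(1-(1+h)^{1-p})\eta_1^\eps[h]\grad h-(1-(1+\tilde h)^{1-p})\eta_1^\eps[\tilde h]\grad\tilde h$ into three contributions and verify that the factor accompanying $\grad\phi$ is of order $\eps$ (so it can be absorbed), while the other two pieces produce factors proportional to $\phi$ or to $V\grad\phi$ with an $\eps$ prefactor. The same kind of splitting applies to the other nonlinear terms. Once these pointwise estimates are in hand, the rest of the stability proof is a standard Gronwall computation.
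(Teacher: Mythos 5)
Your overall framework (weighted spaces, coercivity of $\L+p\,\mathrm{Id}$, smallness of all nonlinear coefficients thanks to the cutoffs) is the right one, and your discussion of the stability estimate correctly identifies where the work lies. However, the existence part has a genuine gap at the compactness step. Aubin--Lions gives you strong convergence of $h_n$ in $L^2_{loc}(L^2_{p+1})$ and pointwise a.e.\ convergence of $h_n$, but only \emph{weak} convergence of $\grad h_n$ in $L^2(V^2dx)$. The problematic terms in $\mathcal{M}^\eps$ are those that are nonlinear in the gradient: the weak form of the divergence term contains the product $\bigl(1-(1+h_n)^{1-p}\bigr)\eta_1^\eps[h_n]\grad h_n$, where $\eta_1^\eps[h_n]$ is itself a nonlinear function of $V\grad h_n$, and there is in addition the genuinely quadratic term $\eta_1^\eps[h_n]V^{1-p}(1+h_n)^{-p}|\grad h_n|^2$. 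A nonlinear function of a weakly convergent sequence does not converge to the nonlinear function of the weak limit, so continuity of $\eta_1^\eps$ and uniform boundedness of the integrands are not enough to pass to the limit; you would need strong convergence of $\grad h_n$ in $L^2(V^2dx)$, which your argument does not provide and which is delicate for equations with quadratic gradient nonlinearities.

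The paper sidesteps exactly this difficulty by not running Galerkin on the nonlinear problem at all. It first solves the \emph{linear} problem $\p_t h+\L h=\mathcal{M}^\eps[g]$ for a prescribed source $g\in L^\infty L^2_{p+1}\cap L^2H^1_2$ (here Galerkin passes to the limit trivially, since the unknown enters only linearly), and then shows that the solution map $g\mapsto h(h_0,g)$ is a contraction on $L^\infty L^2_{p+1}\cap L^2H^1_2$ because the Lipschitz constant of $g\mapsto\mathcal{M}^\eps[g]$ carries a prefactor $\eps$ --- these are precisely the pointwise splittings you describe at the end of your proposal, together with the Hardy-type inequality \eqref{eq-hardy} and one extra integration by parts for the divergence term. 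The Banach fixed point then delivers existence, uniqueness and the stability estimate \eqref{eq-stability} in one stroke. If you want to keep a direct Galerkin construction, you would have to supplement it with an argument for strong gradient convergence (e.g.\ convergence of the energy norms), which is a substantial additional step; otherwise you should restructure the existence proof along the linearize-and-contract lines.
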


Here, we have used $L^2_2$ to denote  the weighted space $L^2(\Omega, V^2dx)$, and the Sobolev space $H^1_2$ is the one for which both the function $h$ and its gradient $\grad h$ belong to $L^2_2$. However, the precise weight in the lower order term is arbitrary as can be seen in the following Hardy-type inequality, that will be used during the proof,
\be \label{eq-hardy}  \Vert h \Vert _{L^2} \lesssim \Vert h \Vert _{L^2_{p+1}} + \Vert \nabla h \Vert _{L^2_2}, \ee 
and which was established in Lemma 5.2 of \cite{ChoiMcCannSeis22}. Actually, via an application of Young's inequality, the weight on the $L^2$ part can indeed be chosen arbitrary large. 

From now on, we use $\Vert \cdot \Vert$ and $\la \cdot, \cdot \ra$ to denote the norm and the inner product in $L^2_{p+1}$.

\begin{proof}[Proof of Theorem \ref{thm-truncated}] 

The proof will be based on a contraction mapping principle argument. It is enough to prove the statement on the finite time interval $[0,1]$, because then a global solution can be constructed iteratively. Notice that the stability estimate \eqref{eq-stability} yields the a priori bound
\[
\sup_{[0,T]} \Vert h \Vert^2 + \int_0^T \Vert \nabla h\Vert_{L_2^2}^2\, dt  \le e^{CT} \Vert h_0  \Vert^2 , 
\]
because the constant function $\tilde h\equiv 0$ is a trivial solution of \eqref{eq-trunc}.

\noindent \emph{Step 1. The linear problem.} Recall that the linear operator $\L $ is self-adjoint in $L^2_{p+1}$ as discussed in Section 2 of \cite{ChoiMcCannSeis22}. By the standard Galerkin approximation scheme and energy estimate ({\it a.k.a.}~the integration by parts), for given $g \in L^\infty L^2_{p+1} \cap L^2H^1_2$ and $h_0 \in L^2_{p+1}$, we obtain unique $h\in L^\infty L^2_{p+1} \cap L^2H^1_2$ solving \be \label{111}\begin{cases} \ba  \p_t h+ \L h & = \mathcal{M}^\e  [g],  \\ h(0)&=h_0 ,
 \ea \end{cases} \ee in the weak sense, and there holds an energy estimate 
\[\Vert h \Vert _{L^\infty L^2_{p+1}} + \Vert \nabla h \Vert _{L^2L_2^2} \lesssim \e ( \Vert g \Vert_{L^\infty L^2_{p+1}}+ \Vert \nabla g \Vert _{L^2L_2^2})+ \Vert h_0 \Vert .\]
Here, the prefactor $\eps$ on the right-hand side comes from the (at least) quadratic nonlinear behavior of the nonlinearity  $\mathcal{M}^\eps [g]$ and of our choice of truncation, that is, we use the bounds $|g|\le \eps$ and $V|\grad g|\le \eps$ in the support of $\mathcal{M}^{\eps}[g]$.

\medskip

\noindent \emph{Step 2. Stability/contraction of linear problem.} For given source functions $g$, $\tilde g\in L^{\infty}L^2_{p+1}\cap L^2 H^2_2$, and initial data $h_0$, $\tilde h_0 \in L^2_{p+1}$, let $h$, $\tilde h$ be the   unique  solutions $h$ and $\tilde h$ to the corresponding linear problems \eqref{eq-trunc-eq} as constructed in the previous step. Here we show that there holds
\begin{equation} \label{eq-step2}
\begin{aligned}
\mel \Vert h-\tilde h\Vert_{C^0 L^2_{p+1}} + \Vert \nabla h -\nabla \tilde h \Vert_{L^2L_2^2}\\
& \lesssim \e (\Vert g-\tilde g \Vert_{L^\infty L^2_{p+1}} + \Vert \nabla g -\nabla \tilde g \Vert _{L^2L_2^2}) + \Vert h_0 -\tilde h_0\Vert. 
\end{aligned}
\end{equation} 

By testing the equation \eqref{111} for the difference $h-\tilde h$ with $(h-\tilde h)V^{1+p}$, we obtain 
\be \label{eq-difference-energy}\frac12\frac{d}{d t } \Vert h- \tilde h \Vert ^2 + \Vert \nabla h -\nabla \tilde h \Vert ^2_{L^2_2} = \la h-\tilde h, \mathcal{M}^\eps [g]-\mathcal{M}^\eps [\tilde g]\ra +(p-1)\Vert h-\tilde h\Vert ^2 . \ee  
We have to estimate the right-hand side in the sequel, which is, in fact, quite elementary. For instance, to treat   the zeroth-order derivative terms, we notice that for any nonlinear smooth function $f(z)$ such that $f(z) = O(z^2) $ for $|z| \to 0$, it holds that
\[
|\eta_0^{\eps}(z)f(z) - \eta_0(\tilde z)f(\tilde z)| \lesssim \eps |z-\tilde z|.
\]
Indeed, if, for instance $|z|\le 2\eps<|\tilde z|$, we have that
\[
|\eta_0^{\eps}(z)f(z) - \eta_0(\tilde z)f(\tilde z)| = \eta_0^{\eps}(z)f(z)  \lesssim \eps^2 \eta_0^{\eps}(z) = \eps^2 |\eta_0^{\eps}(z) - \eta_0^{\eps}(\tilde z)| \lesssim \eps |z-\tilde z|,
\]
where in the last inequality we have used that the truncation function is Lipschitz with a constant of order $\eps^{-1}$. The other cases where both variables are either smaller or larger than $2\eps$ are even simpler. The estimate of the terms in \eqref{eq-difference-energy} that involve the gradients proceeds very similar in the sense that there are no qualitatively new arguments involved.  For the second-order-derivative term, an additional integration by parts is necessary. We skip the details. Eventually, we arrive at
\[
\left|\la h-\tilde h, \mathcal{M}^\eps [g]-\mathcal{M}^\eps [\tilde g]\ra \right| \lesssim \eps \left(\|g-\tilde g\| +\|g-\tilde g\|_{L^2}+ \|\grad g-\grad\tilde g\|_{L^2_2}\right),
\]
and the Hardy-type estimate in \eqref{eq-hardy} tells us that the $L^2$ term on the right-hand side can be dropped. We insert the resulting estimate into \eqref{eq-difference-energy} to obtain
\begin{align*}
\mel
\frac{d}{d t } \Vert h- \tilde h \Vert ^2 + \Vert \nabla h -\nabla \tilde h \Vert ^2_{L^2_2} \\
&\lesssim \eps \left(\|h-\tilde h\| + \|\grad h-\grad\tilde h\|_{L^2_2}\right)\left(\|g-\tilde g\| + \|\grad g-\grad\tilde g\|_{L^2_2}\right)  + \Vert h-\tilde h\Vert ^2 .
\end{align*}
An integration yields \eqref{eq-step2}. Via Young's inequality the latter becomes
\begin{align*}
\mel
\frac{d}{d t } \Vert h- \tilde h \Vert ^2 + \Vert \nabla h -\nabla \tilde h \Vert ^2_{L^2_2} \\
&\lesssim \eps^2  \left(\|g-\tilde g\|^2 + \|\grad g-\grad\tilde g\|_{L^2_2}^2\right)  + \Vert h-\tilde h\Vert ^2 ,
\end{align*}
and we obtain \eqref{eq-step2} upon integration. Continuity in time follows by a standard argument.

\medskip
\noindent \emph{Step 3. Fixed point argument.} From two steps above, for $h_0 \in L^2_{p+1}$ given, the mapping of solution in \emph{Step 1} 
\be  \label{eq-hcontraction}g \mapsto h(h_0,g)\ee 
is a contraction on $L^\infty L^2_{p+1} \cap L^2 H^1_2$ (here we uses \eqref{eq-hardy} to conclude $h\in L^2H^1_2$) for $\e\le \e_0(T)$. Therefore, there exists unique $h$ solving the nonlinear truncated problem \eqref{eq-trunc-eq}. Via standard observations it finally follows that $t\mapsto h(t)\in L^2_{p+1}$ is continuous.
\end{proof}

%

\section{Invariant manifolds}\label{S3}

In this section, we construct invariant manifold for the truncated equation. Our argument follows a method proposed by Koch \cite{Koch97}, which relies on a time discretization. It is thus enough to study the evolution of the time-one maps. Koch's  method was recently used in \cite{Seis15+,SeisWinkler22+} to construct invariant manifolds in the context of the porous medium equation and the thin film equation both on $\R^N$.

Let us denote the  orthogonal projections onto the eigenspaces $E_c$ and $E_s$ by $P_c$ and $P_s$, respectively. Note that $P_c+P_s=I$.  Let us introduce the restricted linear operators
\be \mathcal{L}_c = \mathcal{L} P_c, \quad \mathcal{L}_s = \mathcal{L} P_s = \mathcal{L} -\mathcal{L}_c, \ee
and the associated semi-group operators,
\be L = e^{-\mathcal{L}}, \quad  L_c=e^{-\mathcal{L}_c}, \quad L_s=e^{-\mathcal{L}_s} . \ee  
If $\Pi_k$ denotes the projection onto $\lambda_k$-eigenspace, $h_k(t):= \Pi_k L^t h_0$ solves the linear equation 
$\partial_t h_k+\lambda_k h_k =0$
and hence
\[h_k(t) = e^{-\lambda_k t} \Pi_k h_0. \]
Note that we have bounds on the operator norms
\begin{equation}\label{500}
\Vert L_c^{-1}\Vert \le e^{\lambda_K} ,\quad \Vert L_s \Vert \le e ^{-\lambda_{K+1}} ,\quad \|L\|\le e^{-\lambda_1}.
\end{equation}
Indeed, this follows by
\bea   \Vert L^{-1}_cL_c h_0 \Vert ^2 &=\Vert P_c h_0\Vert^2  =\sum _{i\le K } \Vert \Pi_k h_0 \Vert^2 =  \sum _{i\le K}e^{2\lambda_k} \Vert h_k(1)\Vert^2 \\ & \le \sum e^{2\lambda_K} \Vert h_k(1)\Vert^2   \le  e^{2\lambda_K}\Vert L_c h_0\Vert ^2  ,\eea and a similar computation for $L_s$. 

Choose two numbers $\lambda_+$ and $\lambda_-$ such that 
\[\lambda_+ <\lambda_1 <\cdots <\lambda_K<\lambda_- < \lambda_{K+1} \]
 and also denote 
\[\Lambda_+=e^{-\lambda_+}, \Lambda_{\max} = e^{-\lambda_1}, \Lambda_c=e^{-\lambda_K} ,  \Lambda _- =e^{-\lambda_-}, \Lambda_s = e^{-\lambda_{K+1}}. \]
Observe we have an order in $\Lambda$
\[ \Lambda_s < \Lambda_-<\Lambda_c<\Lambda_{\max }<\Lambda_+.\]

Let us define the time-$t$-map of truncated problem $S^\e : L^2_{p+1} \to L^2_{p+1}$ by 
$$S^\e _t(h_0) := h(t)$$ where $h(t)$ is the unique weak solution of truncated problem  starting from $h_0$ 
given by Theorem \ref{thm-truncated}. In particular, we denote the time-$1$-map by $S^\eps := S^\eps_1$. 

We may decompose the operator $S^\e _1$ as $$S^\e = L+R^\e.$$  If $\eps$ is small enough, the nonlinear part $R^{\eps}$ is a contraction.
We will omit the super script truncation parameter $\e$ for $S$, $R$ and others whenever there is no confusion. 

\begin{lemma} \label{lemma-R}
Let $\eps$ be the truncation parameter. Then  $R(0)=0$ and there holds
the estimate 
\[ \Vert  R(\hat h_0)-R(\tilde h_0)\Vert \le C \eps \Vert \hat  h_0-\tilde h_0 \Vert .\]
\end{lemma}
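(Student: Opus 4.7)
The plan is to reduce the Lipschitz estimate to the stability machinery already developed for Theorem~\ref{thm-truncated}, using the quadratic structure of the truncated nonlinearity to extract the $\eps$ prefactor. First, $R(0)=0$ is immediate: since $\mathcal{M}^\eps[0]=0$, the constant $h\equiv 0$ is the unique solution to \eqref{eq-trunc-eq} with zero initial datum, so $S(0)=L(0)=0$.

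For the Lipschitz bound, let $\hat h(t)$ and $\tilde h(t)$ denote the solutions of \eqref{eq-trunc-eq} starting from $\hat h_0$ and $\tilde h_0$, and introduce
\[
r(t) := \hat h(t)-\tilde h(t) - L^t(\hat h_0 - \tilde h_0),
\]
which satisfies $r(0)=0$, $r(1)=R(\hat h_0)-R(\tilde h_0)$, and the equation
\[
\partial_t r + \mathcal{L} r = \mathcal{M}^\eps[\hat h] - \mathcal{M}^\eps[\tilde h].
\]
Testing with $rV^{p+1}$ and using the identity $\la r,\mathcal{L} r\ra = \|\grad r\|_{L^2_2}^2 - (p-1)\|r\|^2$ produced by \eqref{eqL} yields a differential inequality in which the inner product $\la r, \mathcal{M}^\eps[\hat h] - \mathcal{M}^\eps[\tilde h]\ra$ appears. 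For this term I would apply verbatim the estimate already derived in Step~2 of the proof of Theorem~\ref{thm-truncated}, now with $g=\hat h$ and $\tilde g=\tilde h$: the cut-offs $\eta_0^\eps,\eta_1^\eps$ together with the quadratic vanishing of the individual terms of $\mathcal{M}^\eps$ at $h=0$ produce the gain of an $\eps$ prefactor when comparing two arguments. After Young's inequality absorbs the $\|\grad r\|_{L^2_2}$ cross term into the coercive left-hand side, one obtains
\[
\tfrac{d}{dt}\|r\|^2 \le C\|r\|^2 + C\eps^2\bigl(\|\hat h-\tilde h\|^2 + \|\grad(\hat h-\tilde h)\|_{L^2_2}^2\bigr).
\]

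A Grönwall argument on $[0,1]$ with $r(0)=0$ then bounds $\|r(1)\|^2$ by $\eps^2$ times the parabolic energy of $\hat h-\tilde h$ on the unit interval, and the stability estimate \eqref{eq-stability} controls this energy by $\|\hat h_0-\tilde h_0\|^2$. Taking square roots delivers the claim with constant of order $\eps$. The only step requiring genuine care is the $\eps$-gain in the nonlinearity estimate, which I expect to be the main bookkeeping obstacle: the various pieces of \eqref{eq-Mtrunc} mix zeroth-order, gradient, and divergence contributions that have to be handled separately (the divergence term via an extra integration by parts, as in Step~2). However, since the structural work is already carried out in the proof of Theorem~\ref{thm-truncated}, no new technical ingredient is required beyond tracking the dependence on the source difference $\hat h-\tilde h$ and pairing it with the already-established stability for that difference.
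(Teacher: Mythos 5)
Your proposal is correct and follows essentially the same route as the paper: the paper likewise identifies $R(\hat h_0)-R(\tilde h_0)$ as the time-one value of the solution to the linear problem with zero initial datum and source $\mathcal{M}^\eps[\hat h]-\mathcal{M}^\eps[\tilde h]$ (your $r$ equals the difference $\hat g-\tilde g$ of the two Duhamel parts), applies the $\eps$-weighted stability estimate \eqref{eq-step2} to it, and closes with \eqref{eq-stability}. The only cosmetic difference is that you re-derive the energy inequality behind \eqref{eq-step2} inline rather than citing it.
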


\begin{proof}
Let $\hat h(t)$ and $\tilde h(t)$ be solutions to \eqref{eq-trunc-eq} with the initial data $\hat h_0$ and $\tilde h_0$, respectively. Note that $R(\hat h_0)=\hat g(1)$ and $R(\tilde h_0)= \tilde g(1)$ where $\hat g(t)$ is the weak solution to  
\be \label{eq-g} \begin{cases} \ba  \p_t \hat g+ \L  \hat g & = \mathcal{M}^\e  [\hat h]  \\ \hat g(0)&=0 
 \ea \end{cases} \ee  
and $\tilde g(t)$ solve  the corresponding equation with the right-hand side given by $\mathcal{M}^\eps  [\tilde h]$. In view of  the energy estimate in \eqref{eq-step2} it holds that
\[\Vert \hat g-\tilde g\Vert _{C^0  L^2_{p+1}} + \Vert \nabla \hat g -\nabla \tilde g \Vert _{L^2L^2_2} \le C \eps (\Vert \hat  h- \tilde h \Vert _{L^2L^2_{p+1}}+ \Vert \nabla  \hat h -\nabla \tilde h \Vert _{L^2 L^2_2}),\]where the $C^0$ and $L^2$ norms on time variable are computed on $t\in [0,1]$. {Now the statement follows from  estimate \eqref{eq-step2}.} The statement that $R(0)=0$ is a simple consequence of the definition and the fact that $\mathcal{M} ^{\eps}[0]=0$.

\end{proof}

Let us define the convenient norm \[\vvvert h\vvvert := \max ( \Vert P_c h \Vert_{L^2_{p+1}} , \Vert P_s h\Vert_{L^2_{p+1}} ),\]which is equivalent to $\Vert h\Vert_{L^2_{p+1}}$. For a bi-directed sequence of functions in $L^2_{p+1}$, say $\{ h_{k} \} _{k\in \mathbb{Z}}$,  let us define the norm $\vvvert\cdot \vvvert _{\Lambda_-,\Lambda_+}$ and the space
\be  \ell _{\Lambda_-,\Lambda_+} := \{\{h_k \}_{k\in \mathbb{Z}}\,:\,   \vvvert \{h_k \}_{k\in \mathbb{Z}} \vvvert _{\Lambda_-,\Lambda_+}:=\sup _{k\in \mathbb{N}_0} \{ \Lambda _+^{-k} \vvvert h_k\vvvert , \Lambda_-^{k} \vvvert h_{-k} \vvvert  \} <\infty  \}.\ee
Similarly, we define the norm $\vvvert \cdot \vvvert_{\Lambda_-} $ and the space 
\be \ell_{\Lambda_-} :=\{ \{h_k \}_{k\in \mathbb{N}_0} \,:\, \vvvert \{h_k\}\vvvert_{\Lambda_-}:= \sup_{k\in \mathbb{N}_0} \Lambda_-^{-k} \vvvert  h_k \vvvert  <\infty \}  .\ee

\subsection{Center manifolds} 
Define an operator $J: E_c \times \ell _{\Lambda_-,\Lambda_+}\to \ell _{\Lambda_-,\Lambda_+}$ with $J(\cdot,\cdot)= \{ J_k(\cdot,\cdot) \} _{k \in \mathbb{Z}}$ by
  \be \label{eq-defJ} J_k(h_c, \{h_i\}):= \begin{cases} \ba  S( h_{k-1}) &&\text{ if } k\ge 1 \\
 P_s S(h_{-1})+h_c &&\text{ if } k =0 \\
 P_s S(h_{k-1})+L_c^{-1} P_c (h_{k+1} -R (h_k))&&\text{ if }k\le -1 
   \ea \end{cases}\ee  

The evolution on the center manifold is obtained via a fixed point for $J$. Its existence is our first goal.

\begin{lemma} \label{thm-centermfd} For sufficiently small truncation parameter $\eps \le \eps_0$, the mapping $J(h_c,\cdot)$ admits a unique fixed point $J(h_c, \{h_k\})= \{h_{k}\}$ and there holds 
\be \label{eq-centermfdfixedpoint}\vvvert \{h_k\}\vvvert _{\Lambda_-,\Lambda_+} \le  \vvvert h _c \vvvert .\ee 
\end{lemma}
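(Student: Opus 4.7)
The plan is to apply the Banach contraction principle to $J(h_c,\cdot)$ on the Banach space $\ell_{\Lambda_-,\Lambda_+}$. I would verify, for $\e$ sufficiently small, that (i) $J(h_c,\cdot)$ is a uniform contraction, and (ii) it leaves the closed ball $B := \{\{h_k\} : \vvvert\{h_k\}\vvvert_{\Lambda_-,\Lambda_+} \le \vvvert h_c\vvvert\}$ invariant. Together these furnish a unique fixed point lying in $B$, which is precisely \eqref{eq-centermfdfixedpoint}.

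The key ingredients are the splitting $S = L + R$ with $R(0) = 0$ and $R$ being $C\e$-Lipschitz by Lemma \ref{lemma-R}, together with the operator bounds $\Vert L_c\Vert \le \Lambda_{\max}$, $\Vert L_c^{-1}\Vert \le \Lambda_c^{-1}$, $\Vert L_s\Vert \le \Lambda_s$, and the strict ordering $\Lambda_s < \Lambda_- < \Lambda_c < \Lambda_{\max} < \Lambda_+$. For the contraction in (i), I would take two sequences $\{h_k\}$, $\{\tilde h_k\}$ and bound $\vvvert J_k - \tilde J_k\vvvert$ componentwise on $P_c$ and $P_s$ in each of the three cases in the definition of $J_k$. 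Multiplying by the appropriate weight $\Lambda_\pm^{-k}$ and extracting the neighbouring weight from $\vvvert h_{k\pm 1} - \tilde h_{k\pm 1}\vvvert$ produces three relevant multiplicative factors: $(\Lambda_{\max} + C\e)/\Lambda_+$ from the forward evolution ($k\ge 1$) through the center part of $L$, $(\Lambda_s + C\e)/\Lambda_-$ from the stable part of $L$ appearing in all three cases, and $(\Lambda_- + C\e)/\Lambda_c$ from the $L_c^{-1}P_c$ term in the $k \le -1$ case. By the ordering of the $\Lambda$'s, each is strictly less than $1$ for $\e$ small, and taking the supremum gives a contraction constant $q(\e)<1$.

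For invariance in (ii), I would repeat these computations with single sequences in place of differences, starting from $\vvvert\{h_k\}\vvvert \le \vvvert h_c\vvvert$. The only genuinely new contribution is the inhomogeneous term $h_c$ in $J_0$: its $P_c$-part equals $h_c$ itself and contributes exactly $\vvvert h_c\vvvert$ at the weight-$1$ index $k=0$, saturating the ball bound; the $P_s$-part of $J_0$ and every other index reproduce the same three factors as above, each applied to $\vvvert h_c\vvvert$, and hence each strictly smaller than $\vvvert h_c\vvvert$. Therefore $\vvvert J(h_c,\{h_k\})\vvvert_{\Lambda_-,\Lambda_+} \le \vvvert h_c\vvvert$, so $B$ is invariant.

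The main obstacle is the bookkeeping: one must track carefully which of $\Lambda_+^{-k}$ or $\Lambda_-^{-k}$ attaches to each index when $J_k$ involves $h_{k-1}$, $h_k$, and $h_{k+1}$ straddling $k=0$, and check that the two weight conventions match consistently at the junction (they do, since $\Lambda_+^{0}=\Lambda_-^{0}=1$). The only analytic input needed beyond this algebra is the uniform Lipschitz estimate on $R$ supplied by Lemma \ref{lemma-R}; no further PDE work is required.
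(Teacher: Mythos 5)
Your proposal is correct and follows essentially the same route as the paper: establish that $J(h_c,\cdot)$ maps the ball $\{\vvvert\{h_k\}\vvvert_{\Lambda_-,\Lambda_+}\le\vvvert h_c\vvvert\}$ into itself and is a contraction, using the splitting $S=L+R$, the $C\e$-Lipschitz bound on $R$ from Lemma \ref{lemma-R}, and exactly the three weighted factors $(\Lambda_{\max}+C\e)/\Lambda_+$, $(\Lambda_s+C\e)/\Lambda_-$, $(\Lambda_-+C\e)/\Lambda_c$ that the paper collects in \eqref{eq-egap}. The only discrepancy is cosmetic: the paper writes $\min$ where the logic (and your write-up) correctly requires the \emph{maximum} of these factors to be the contraction constant $K<1$.
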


\begin{proof}
Choose $\eps_{gap}$ so that 
\be \label{eq-egap} \min (\frac{\Lambda_{\max} + \e_{gap}}{\Lambda_+}, \frac{\Lambda_s +\e _{gap}}{\Lambda_-}, \frac{\Lambda_-+ \e _{gap}}{\Lambda _c} )=K<1 .\ee  By Lemma  \ref{lemma-R}, we can find a small constant $c$ such that for $\eps <c\eps_{gap}$, the solutions to the truncated problem satisfy 
\be \label{eq-R-eps-gap}\vvvert R^\eps (h)-R^\eps (\tilde h) \vvvert \le \e _{gap} \vvvert h- \tilde h \vvvert. \ee 

First we show a bound  
\bea \label{112}
\vvvert J(h_c, \{ h_k \} ) \vvvert \le \max (\vvvert h_c\vvvert, \vvvert \{h_k\}\vvvert_{\Lambda_-,\Lambda_+}),\eea which in particular implies $J(h_c,\cdot)$ is a self-mapping \[J(h_c,\cdot): B_{\vvvert h_c \vvvert }\to B_{\vvvert h_c \vvvert } .\]
For the sake of simplicity, let us denote $J(h_c, \{ h_k\})= \{J_k\}$.
If $k\ge 1$, using the linear maximal estimates \eqref{500} on $L$ and the bound in \eqref{eq-R-eps-gap} gives \bea \Lambda_+^{-k} \vvvert J_k\vvvert = \Lambda_+^{-k}\vvvert S(h_{k-1})\vvvert  & \le \Lambda_+^{-k} (\vvvert Lh_{k-1}\vvvert + \vvvert R(h_{k-1})\vvvert )\\
& \le {\Lambda_+^{-(k-1)}}\frac{\Lambda_{\max}+\eps_{gap}}{\Lambda_+}  \vvvert h_{k-1}\vvvert \le \vvvert \{ h_l \}\vvvert_{\Lambda_-,\Lambda_+}.\eea  
If $k= 0$, by a similar argument, we find \bea \vvvert J_0\vvvert &\le  \max (\vvvert h_c \vvvert, \vvvert P_s S(h_{-1})\vvvert)\\
 & \le \max   (\vvvert h_c \vvvert,  \frac{\Lambda_s +\eps_{gap}}{\Lambda_-} \vvvert \{ h_l \}\vvvert_{\Lambda_-, \Lambda_+})\\
&\le  \max   (\vvvert h_c \vvvert,  \vvvert \{ h_l \}\vvvert_{\Lambda_-, \Lambda_+}).\eea
Finally, if $k\le -1$, it holds that
\bea \Lambda_-^{-k} \vvvert J_k\vvvert & =\Lambda_-^{-k}\max (\vvvert P_sS(h_{k-1})\vvvert, \vvvert L_c^{-1} P_c (h_{k+1} -R(h_k) ) \vvvert )   \\
& \le \Lambda_-^{-k}\max ( { (\Lambda_s +\e _{gap})} \vvvert h_{k-1}\vvvert   , \frac{\Lambda_- + \e _{gap}}{\Lambda _c}  \vvvert h_k \vvvert )   \\ 
&\le \vvvert \{h_l \}\vvvert _{\Lambda^-,\Lambda_+}.\eea  
By the condition on $\e_{gap}$ in \eqref{eq-egap} this shows the bound.

Next, by the same argument as above, we could have shown that 
\[\vvvert J(h_c,\{h_k\})-J(h_c,\{\tilde h_k \}) \vvvert_{\Lambda_-,\Lambda_+} \le K \vvvert \{h_k\} - \{\tilde h_k \} \vvvert _{\Lambda_-,\Lambda_+} , \] 
and this shows $J(h_c,\cdot)$ is a contraction mapping on $\ell _{\Lambda_-,\Lambda_+}$ and thus there exists a unique fixed point in $\vvvert h_c\vvvert$-ball and it satisfies the estimate \eqref{eq-centermfdfixedpoint} by the virtue of \eqref{112}.
\end{proof}

The fixed point $\{h_k\}$ obtained from Theorem \ref{thm-centermfd} is the unique eternal solution to $S(h_{k-1})=h_k$ on $\ell_{\Lambda_-,\Lambda_+}$ that satisfies $P_c h_0 = h_c$. Indeed, for $k\ge 1$, it holds that $h_{k}= S(h_{k-1})$ by definition. For $k=0$, we observe that $h_0= P_s S(h_{-1})+h_c$, which implies $P_c h_0 = h_c$. For $k\le -1$ it holds $P_s h_{k}=P_s S(h_{k-1}) $ by definition and $P_c h_{k} = L_c^{-1}(P_c(h_{k+1} -R(h_k))$ implies that 
\[L_c h_k +P_cR(h_k) = P_c S(h_k)= P_c(h_{k+1}).\]
Combining all above, $S(h_{k-1})= h_k$ for all $k\in \mathbb{Z}$ and $P_c(h_0)=h_c$. The uniqueness follows by the uniqueness of the fixed point on $\ell _{\Lambda_-,\Lambda_+}$.

From now on, we denote the fixed point map $h_c \mapsto \{h_k\}$ by $ \Theta=\Theta_{\Lambda_-,\Lambda_+}: L^2_{p+1} \to \ell _{\Lambda_-,\Lambda_+} $. Theorem \ref{thm-centermfd} directly says $\vvvert \Theta (h_c )\vvvert_{\Lambda_-,\Lambda_+} \le \vvvert h_c \vvvert $.

\begin{lemma}\label{lem-lipTheta}$ \Theta$ is a Lipschitz map from $(L^2_{p+1}\cap E_c, \vvvert \cdot \vvvert)$ to $(\ell _{\Lambda_-,\Lambda_+}, \vvvert \cdot \vvvert_{\Lambda_-,\Lambda_+})$ with $\mathrm{Lip}(\Theta)\le \frac1{1-K}$. 
\begin{proof}
We have shown that $J(h_c,\cdot)$ is a contraction mapping for small $\eps<\eps_0$ with Lipschitz constant $K<1$. For $h_c$ and $\tilde  h_c$ in $E_c$,  
\bea &\vvvert \Theta (h_c ) - \Theta(\tilde h_c ) \vvvert_{\Lambda_-,\Lambda_+} \le \vvvert J(h_c, \Theta (h_c))- J(\tilde h_c, \Theta (\tilde h_c)) \vvvert \\
& \le \vvvert J(h_c, \Theta(h_c))-J(h_c,\Theta (\tilde h_c))\vvvert + \vvvert   J(h_c,\Theta (\tilde h_c))-J(\tilde h_c,\Theta (\tilde h_c))\vvvert  \\
&\le K \vvvert \Theta(h_c)-\Theta(\tilde h_c )\vvvert + \vvvert h_c-\tilde h_c \vvvert ,\eea 
where the last inequality follows by the definition of $J$ in  \eqref{eq-defJ}. Now this shows $\Theta$ is $\frac{1}{1-K}$-Lipschitz map. 
\end{proof}
\end{lemma}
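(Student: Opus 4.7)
The plan is a textbook parametric fixed point argument. Lemma \ref{thm-centermfd} already delivers that $\Theta(h_c)$ is the unique fixed point of the map $J(h_c, \cdot)$, and the same proof shows that $J(h_c, \cdot)$ is a $K$-contraction on $\ell_{\Lambda_-, \Lambda_+}$ with a constant $K < 1$ that is independent of $h_c$. Consequently, Lipschitz dependence of the fixed point on the parameter $h_c$ will follow from the standard parameter-dependent Banach fixed point recipe, once I quantify the Lipschitz continuity of $h_c \mapsto J(h_c, \{h_k\})$ for fixed $\{h_k\}$.

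Concretely, I will insert the fixed point identity $\Theta(h_c) = J(h_c, \Theta(h_c))$ and telescope:
\begin{align*}
\Theta(h_c) - \Theta(\tilde h_c) = \bigl[ J(h_c, \Theta(h_c)) - J(h_c, \Theta(\tilde h_c)) \bigr] + \bigl[ J(h_c, \Theta(\tilde h_c)) - J(\tilde h_c, \Theta(\tilde h_c)) \bigr].
\end{align*}
For the first bracket, the contraction estimate from Lemma \ref{thm-centermfd} gives directly an upper bound of $K \vvvert \Theta(h_c) - \Theta(\tilde h_c) \vvvert_{\Lambda_-, \Lambda_+}$. For the second bracket, I will simply read off from the definition \eqref{eq-defJ} that the parameter $h_c$ enters $J$ only through the $k = 0$ slot, where it appears additively as $+ h_c$. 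Hence the difference $J(h_c, \Theta(\tilde h_c)) - J(\tilde h_c, \Theta(\tilde h_c))$ is a sequence supported at $k = 0$ with value $h_c - \tilde h_c$, and its $\ell_{\Lambda_-, \Lambda_+}$ norm equals $\Lambda_-^{0}\vvvert h_c - \tilde h_c \vvvert = \vvvert h_c - \tilde h_c \vvvert$.

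Combining the two bounds yields
\begin{align*}
\vvvert \Theta(h_c) - \Theta(\tilde h_c) \vvvert_{\Lambda_-, \Lambda_+} \le K \vvvert \Theta(h_c) - \Theta(\tilde h_c) \vvvert_{\Lambda_-, \Lambda_+} + \vvvert h_c - \tilde h_c \vvvert,
\end{align*}
and absorbing the first term on the left produces the claimed Lipschitz constant $1/(1-K)$. I do not expect any real obstacle here; the entire argument collapses to observing the affine, slope-one dependence of $J(\cdot, \{h_k\})$ on $h_c$. The only sanity check is that the weight at $k = 0$ is unity, so that passing to the $\vvvert \cdot \vvvert_{\Lambda_-, \Lambda_+}$ norm does not introduce any hidden factor, and that the contraction constant $K$ produced in Lemma \ref{thm-centermfd} is indeed uniform in the parameter $h_c$, both of which are immediate from \eqref{eq-defJ}.
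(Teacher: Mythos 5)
Your proposal is correct and follows essentially the same route as the paper: the fixed point identity, the telescoping split into a contraction term and a parameter term, the observation that $h_c$ enters $J$ additively only in the $k=0$ slot (where the weight is unity), and absorption of the $K$-term. No further comment is needed.
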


Suppose $\Theta (f)= \{\Theta_k(f) \}_{k\in \mathbb{Z}}$. We define $\theta : E_c\to E_s$ by 
\[\theta (f) := P_s \Theta_0 (f) = \Theta_0 (f) -f .\]

By the virtue of Lemma \ref{lem-lipTheta}, we have $\mathrm{Lip}(\theta)\le \frac{1}{1-K}$. Here, we want to improve on the Lipschitz constant.

\begin{lemma}\label{lem-liptheta}The map $\theta: (E_c\cap L^2_{p+1},\vvvert \cdot \vvvert )\to (E_s\cap L^2_{p+1},\vvvert \cdot \vvvert) $
is a Lipschitz with $\mathrm{Lip}(\theta) \lesssim \e _{gap}$.
\end{lemma}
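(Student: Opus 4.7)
The plan is to iterate the semiflow backward along the invariant trajectory produced by $\Theta$ and to exploit the spectral gap $\Lambda_s<\Lambda_-$ in order to eliminate the linear contribution from $\theta$, leaving only the (small) nonlinearity $R$.

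Fix $f,\tilde f\in E_c$ and set $\{h_k\}:=\Theta(f)$, $\{\tilde h_k\}:=\Theta(\tilde f)$. From the characterization following Lemma \ref{thm-centermfd} we have $h_k=S(h_{k-1})=Lh_{k-1}+R(h_{k-1})$ for every $k\in\mathbb{Z}$ and $P_c h_0=f$, so that $\theta(f)=P_s h_0$. Since the spectral projection $P_s$ commutes with the linear semigroup $L$, applying $P_s$ to the one-step identity gives
\[
P_s h_k=L_s P_s h_{k-1}+P_s R(h_{k-1}),
\]
and iterating backward $k$ steps from $k=0$ yields
\[
\theta(f) = L_s^k P_s h_{-k} + \sum_{j=1}^k L_s^{j-1} P_s R(h_{-j}).
\]
Using $\|L_s\|\le\Lambda_s$ together with the admissibility estimate $\vvvert h_{-k}\vvvert \le \Lambda_-^{-k}\vvvert\Theta(f)\vvvert_{\Lambda_-,\Lambda_+}\le \Lambda_-^{-k}\vvvert f\vvvert$ coming from Lemma \ref{thm-centermfd}, the boundary term satisfies $\vvvert L_s^k P_s h_{-k}\vvvert \le (\Lambda_s/\Lambda_-)^k\vvvert f\vvvert\to 0$ as $k\to\infty$. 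Passing to the limit produces the representation
\[
\theta(f)=\sum_{j=1}^\infty L_s^{j-1} P_s R(h_{-j}),
\]
in which only the nonlinear operator $R$ appears.

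I would then difference this identity for $f$ and $\tilde f$, and combine \eqref{eq-R-eps-gap} with the Lipschitz bound $\vvvert\Theta(f)-\Theta(\tilde f)\vvvert_{\Lambda_-,\Lambda_+}\le(1-K)^{-1}\vvvert f-\tilde f\vvvert$ from Lemma \ref{lem-lipTheta} and the admissibility estimate at index $-j$ applied to the difference. Each summand is then controlled by
\[
\vvvert L_s^{j-1}\bigl(P_s R(h_{-j})-P_s R(\tilde h_{-j})\bigr)\vvvert \le \Lambda_s^{j-1}\,\eps_{gap}\cdot\frac{\Lambda_-^{-j}}{1-K}\,\vvvert f-\tilde f\vvvert,
\]
and summing the geometric series (convergent because $\Lambda_s<\Lambda_-$) gives
\[
\vvvert \theta(f)-\theta(\tilde f)\vvvert \le \frac{\eps_{gap}}{(1-K)(\Lambda_--\Lambda_s)}\,\vvvert f-\tilde f\vvvert,
\]
which is the claimed bound $\mathrm{Lip}(\theta)\lesssim \eps_{gap}$, since both $1-K$ and $\Lambda_--\Lambda_s$ are fixed positive constants determined by \eqref{eq-egap}.

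The main point that requires care, rather than a genuine obstacle, is the backward iteration step: because $L_s^{-1}$ is unbounded, one cannot simply invert the forward map, and it is precisely the mismatch between the $\Lambda_s$-decay of $L_s$ going forward and the $\Lambda_-^{-1}$ admissible growth of the sequence going backward that forces the linear boundary term $L_s^k P_s h_{-k}$ to vanish in the limit. This is what replaces the naive Lipschitz bound $(1-K)^{-1}$ from Lemma \ref{lem-lipTheta} by the sharper $O(\eps_{gap})$ bound.
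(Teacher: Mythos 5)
Your proposal is correct and follows essentially the same route as the paper: the one-step identity $P_s h_k = L_s P_s h_{k-1} + P_s R(h_{k-1})$, backward iteration with the boundary term killed by $(\Lambda_s/\Lambda_-)^k \to 0$, the contraction bound \eqref{eq-R-eps-gap} on $R$, the Lipschitz bound on $\Theta$ from Lemma \ref{lem-lipTheta}, and summation of the geometric series to get the factor $\eps_{gap}/(\Lambda_- - \Lambda_s)$. The only cosmetic difference is that you pass to the limiting series representation of $\theta(f)$ before differencing, whereas the paper iterates the differenced inequality directly.
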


\begin{proof} 
 For given $g$ and $\tilde g$ in $E_c$, let $\{h_k\}=\Theta(g)$ and $\{ \tilde h_k\}= \Theta(\tilde g )$. Since \bea \theta(g)-\theta(\tilde g) &=  P_s (S(h_{-1})-S(\tilde h_{-1})) \\
&= P_s (L(h_{-1}-\tilde h _{-1}))+P_s(R(h_{-1})-R(\tilde h_{-1})) ,\eea
we have that
\bea \vvvert \theta(g)-\theta(\tilde g ) \vvvert \le \Lambda_s \vvvert P_s  h_{-1} - P_s \tilde h_{-1} \vvvert +C\eps _{gap}\vvvert h_{-1}-\tilde h_{-1} \vvvert  .  \eea
By iterating this estimate $k$ times, we obtain
\bea  \vvvert \theta(g)-\theta(\tilde g ) \vvvert &\le  \Lambda_s ^k \vvvert P_s h_{-k} - P_s \tilde h_{-k} \vvvert + C\eps_{gap} \sum_{j=1}^k \Lambda_s^{j-1} \vvvert h_{-j} -\tilde h_{-j}\vvvert  \\
& \le \left( \frac{\Lambda_s}{\Lambda_-}\right)^k + \frac{C\eps_{gap}}{\Lambda_-} \sum _{j=1}^k  \left( \frac{\Lambda_s}{\Lambda_-}\right)^{j-1}\vvvert \{h_k\}-\{\tilde h_k\} \vvvert_{\Lambda_-,\Lambda_+},
\eea
and   computing the limit on the right-hand side, we find
\[
\vvvert \theta(g)-\theta(\tilde g ) \vvvert 
\le  C\frac{\eps_{gap}}{\Lambda_- -\Lambda_s}\vvvert \{h_k\}-\{\tilde h_k\} \vvvert_{\Lambda_-,\Lambda_+} .
 \]
Since $\vvvert \{h_k\}-\{\tilde h_k\} \vvvert_{\Lambda_-,\Lambda_+} \le \frac{1}{1-K} \vvvert g -\tilde g\vvvert$ by Lemma \ref{lem-lipTheta}, this finishes the proof.  
   
\end{proof}

\begin{definition}[Center manifold] The center manifold is the image of $\Theta _0:E_c \rightarrow L^2_{p+1}$ defined by 
\[W^\eps _c:= \textrm{graph}_{E_c}\theta =\{f+\theta(f)\,:\, f\in E_c \} .\]
\end{definition}
The following property explains the origin of its name, invariant manifold.
\begin{lemma} [Invariance] \label{L23} There holds
\[S^\eps (W^\eps _c)= W^\eps _c .\]
\begin{proof} Note that we have $h \in W^\e _c$ if and only if there exists an orbit $\{h_k\}\in \ell _{\Lambda_-,\Lambda_+}$ with $h_0=h$. Here an orbit means     $S^\eps (h_{j-1})=h_j$ for all $j\in \mathbb{Z}$.  

If $h \in W^\eps _c$ and $\{h_k\}$ is the corresponding orbit, then a time translation $\{h_{k-1}\}$ is also an orbit in $\ell _{\Lambda_-,\Lambda_+}$. Thus  $h_{-1}\in W^\e _c $, i.e., $h=S^\e (h_{-1})\in S^\e (W^\e _c)$. This shows $W^\e _c\subset S^\e (W^\eps _c)$. Similarly, if $h \in S^\e (W^\e _c)$ there exist an orbit $\{g_k\}\in \ell_{\Lambda_+,\Lambda_-}$ such that $h= S^\e (g_0)=g_1$. Since $\{g_{k+1}\}$ is an orbit in the same space, this shows $h=g_1 \in W^\e _c$. 
\end{proof}

\end{lemma}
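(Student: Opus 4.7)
The plan is to reduce both inclusions to the orbit characterization of the center manifold that was extracted in the paragraph immediately following Lemma \ref{thm-centermfd}: namely, $h \in W_c^\eps$ if and only if there exists a bi-infinite sequence $\{h_k\}_{k\in\mathbb{Z}} \in \ell_{\Lambda_-,\Lambda_+}$ with $h_0 = h$ and $S^\eps(h_{k-1}) = h_k$ for every integer $k$. Call such a sequence an \emph{eternal orbit through $h$}. The forward direction of this characterization is exactly what is verified there by taking $\{h_k\} = \Theta(P_c h)$ and unpacking the identity $J(P_c h, \{h_k\}) = \{h_k\}$ coordinate by coordinate; the backward direction follows from the uniqueness half of Lemma \ref{thm-centermfd}, since any eternal orbit $\{h_k\} \in \ell_{\Lambda_-,\Lambda_+}$ with $h_0 = h$ is automatically a fixed point of $J(P_c h, \cdot)$, and $J(P_c h, \cdot)$ has a unique fixed point.

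With this characterization in hand, the equality $S^\eps(W_c^\eps) = W_c^\eps$ reduces to the observation that the set of eternal orbits in $\ell_{\Lambda_-,\Lambda_+}$ is closed under unit time translation. For the inclusion $S^\eps(W_c^\eps) \subset W_c^\eps$, let $h \in W_c^\eps$ with eternal orbit $\{h_k\}$; the forward shift $\tilde h_k := h_{k+1}$ is again an eternal orbit, now through $h_1 = S^\eps(h)$, so $S^\eps(h) \in W_c^\eps$. For the reverse inclusion $W_c^\eps \subset S^\eps(W_c^\eps)$, I shift the other way: $\hat h_k := h_{k-1}$ is an eternal orbit through $h_{-1}$, so $h_{-1} \in W_c^\eps$ and hence $h = S^\eps(h_{-1}) \in S^\eps(W_c^\eps)$.

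The only step that needs an actual estimate is that the time shift maps $\ell_{\Lambda_-,\Lambda_+}$ into itself. This is immediate from the definition of $\vvvert \cdot \vvvert_{\Lambda_-,\Lambda_+}$: a forward shift changes the weight on coordinate $k$ by a factor of $\Lambda_+^{\pm 1}$ for $k \ge 1$ and $\Lambda_-^{\pm 1}$ for $k \le 0$, with the one ``boundary'' term controlled by the $L^2_{p+1}$ norm of a single $h_k$; the backward shift is symmetric. Consequently the shifted norm is bounded by a fixed multiple of the original, so the shifted sequence still lies in $\ell_{\Lambda_-,\Lambda_+}$. I expect no deeper obstacle: the substance of the lemma is the philosophical statement that $W_c^\eps$ consists precisely of the present values of globally defined orbits, and such a set is trivially invariant under the semi-flow.
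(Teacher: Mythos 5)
Your proposal is correct and follows essentially the same route as the paper: both arguments rest on the characterization of $W^\eps_c$ as the set of time-zero slices of eternal orbits in $\ell_{\Lambda_-,\Lambda_+}$ (with existence from the fixed point of $J$ and uniqueness from the contraction in Lemma \ref{thm-centermfd}), and both obtain the two inclusions by shifting an eternal orbit forward and backward in time. Your additional remark that the shift only rescales the weighted norm by a bounded factor, so that shifted orbits remain in $\ell_{\Lambda_-,\Lambda_+}$, is a detail the paper leaves implicit but is verified correctly.
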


Lemma \ref{L23} shows $S^\eps _n(W^\eps_c)=W^\eps_c$ for all $n\in\N$. In fact, it holds  $S^\eps_t(W^\eps_c)=W^\eps_c$ for non-integer times $t\ge0$ as well, and hence a solution starting from $W^\e_c$ stays in it for later times. Indeed, for non-integer $t$, the proof is essentially identical to Lemma \ref{L23} but the only missing ingredient is to show if $\{h_k\}_{k \in \mathbb{Z}}$ is an orbit in $\ell _{\Lambda_-,\Lambda_+}$ then so is $\{S^\eps_t(h_k)\}_{k\in \mathbb{Z}}$. This follows from the stability inequality \eqref{eq-stability} in Theorem \ref{thm-truncated}. An interesting further question to study is if $\theta$ is a $C^1$-map which is tangent to $E_c$ at the origin, i.e., $D\theta (0)=0$. Notice that the previous result only proves $\theta(0)=0$. {The stronger statement
is indeed  expected and a potential proof relies on suitable maximal regularity estimates, see, for instance, the constructions in \cite{Seis15+,SeisWinkler22+}. As this property is not essential for our purposes and maximal regularity estimates are not (yet) available, we do not investigate the regularity of the manifold here. } 

\subsection{Stable manifolds}

Now we construct the stable manifold with the help of another  fixed point argument. 

 For notational simplicity, we will ignore the dependence of $\eps$ in the proof.  Define the operator 
\[I: E_s \times  (L^2_{p+1})^{\mathbb{N}_0}\to (L^2_{p+1})^{\mathbb{N}_0}\] with $I(\cdot,\cdot)= \{ I_k(\cdot,\cdot) \} _{k \in \mathbb{N}_0}$ by 
\be \label{eq-defI} I_k(g_s, \{h_i\}):= \begin{cases} \ba 
 P_s (g_s )+ L_c^{-1}P_c(h_1-R(h_0)) &&\text{ if } k =0 \\
 P_s S(h_{k-1})+L_c^{-1} P_c (h_{k+1} -R (h_k))&&\text{ if }k\ge 1 .
   \ea \end{cases}\ee  
For given $g\in L^2_{p+1}$ and $g_s\in E_s$, we consider the map $I_{g,g_s}$ defined by  \[ \{h_i\}_{i\in \mathbb{N}_0} \mapsto I(g_s+P_s(g),\{h_i\} + \{S^i(g)\}) - \{S^i(g) \}.\]

\begin{lemma}\label{L10}
Let $\eps_{gap}$ be fixed so that the condition \eqref{eq-egap} is satisfied. Suppose the truncation parameter $\eps$ is small so that the condition \eqref{eq-R-eps-gap} is met.  The mapping $I_{g,g_s}$ admits a unique fixed point $I_{g,g_s}(\{h_k\}) = \{h_k\}$ and there holds
\be \vvvert I_{g,g_s}(\{h_k\})\vvvert_{\Lambda_-} \le    \vvvert g_s\vvvert . \ee 
\end{lemma}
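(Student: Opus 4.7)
The plan is to apply Banach's fixed point theorem to $I_{g,g_s}$ on the closed ball $\bar B$ of radius $\vvvert g_s\vvvert$ in $(\ell_{\Lambda_-},\vvvert\cdot\vvvert_{\Lambda_-})$, mirroring exactly the contraction argument for $J(h_c,\cdot)$ in Lemma \ref{thm-centermfd}. The two ingredients to verify are self-mapping of $\bar B$ and contractivity; both rely on the spectral bounds $\Vert L_s\Vert\le\Lambda_s$ and $\Vert L_c^{-1}\Vert\le\Lambda_c^{-1}$, the gap condition \eqref{eq-egap}, and the Lipschitz estimate \eqref{eq-R-eps-gap} for the nonlinear remainder $R$.

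The one genuinely delicate point is the bookkeeping required to isolate the linear and the nonlinear contributions of $I_{g,g_s}$. Writing $\tilde h_i:=h_i+S^i(g)$ and splitting $S=L+R$, one uses that $\{S^i(g)\}_{i\ge 0}$ is itself an orbit in order to see that the terms depending on $g$ alone cancel once $S^k(g)$ is subtracted. A direct expansion then produces
\be\ba
(I_{g,g_s}\{h_i\})_0 &= g_s + L_c^{-1} P_c h_1 - L_c^{-1} P_c\bigl[R(\tilde h_0)-R(g)\bigr],\\
(I_{g,g_s}\{h_i\})_k &= L_s h_{k-1} + L_c^{-1} P_c h_{k+1} + P_s\bigl[R(\tilde h_{k-1})-R(S^{k-1}g)\bigr]\\
&\quad -L_c^{-1} P_c\bigl[R(\tilde h_k)-R(S^k g)\bigr],\qquad k\ge 1.
\ea\ee
Since $\tilde h_k-S^k(g)=h_k$, the Lipschitz bound \eqref{eq-R-eps-gap} controls each $R$-difference by $\eps_{gap}\vvvert h_k\vvvert$. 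Multiplying the $\vvvert\cdot\vvvert$-norm of each component by $\Lambda_-^{-k}$ and using $\vvvert h_k\vvvert\le\Lambda_-^k\vvvert\{h_i\}\vvvert_{\Lambda_-}$, the prefactors that appear are precisely
\[\frac{\Lambda_s+\eps_{gap}}{\Lambda_-}\quad\text{and}\quad\frac{\Lambda_-+\eps_{gap}}{\Lambda_c},\]
each bounded by the constant $K<1$ from \eqref{eq-egap}. The additional inhomogeneous term at $k=0$ contributes $\vvvert g_s\vvvert$, so altogether
\[\vvvert I_{g,g_s}\{h_i\}\vvvert_{\Lambda_-}\le \max\bigl(\vvvert g_s\vvvert, K\vvvert\{h_i\}\vvvert_{\Lambda_-}\bigr),\]
which is exactly the self-mapping property of $\bar B$.

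The difference $I_{g,g_s}\{h_i\}-I_{g,g_s}\{h'_i\}$ has the same structure but with the constant $g_s$-piece eliminated, so the identical calculation yields $\vvvert I_{g,g_s}\{h_i\}-I_{g,g_s}\{h'_i\}\vvvert_{\Lambda_-}\le K\vvvert\{h_i\}-\{h'_i\}\vvvert_{\Lambda_-}$. Banach's fixed point theorem then produces the unique fixed point $\{h_k\}\in\bar B$ with the asserted bound $\vvvert\{h_k\}\vvvert_{\Lambda_-}\le\vvvert g_s\vvvert$. The principal obstacle is precisely the cancellation extracted above: without carefully matching the $g$-dependent linear parts of $L\tilde h_{k-1}$ and of $L_c^{-1}P_c\tilde h_{k+1}$ against $S^k(g)$, one would not obtain an estimate that closes in $\vvvert\{h_i\}\vvvert_{\Lambda_-}$ alone. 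Once this matching is performed, everything reduces to the smallness of $\eps$ built into \eqref{eq-R-eps-gap}, and the remainder is a verbatim adaptation of the center-manifold contraction in Lemma \ref{thm-centermfd}.
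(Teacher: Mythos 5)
Your argument is correct and is essentially the paper's proof: the paper obtains the same contraction constant $K$ by the estimates of Lemma \ref{thm-centermfd} and deduces the self-mapping of the ball $B_{\vvvert g_s\vvvert}$ from the identity $I(g_s+P_s g,\{S^i(g)\})-\{S^i(g)\}=\{g_s\delta_{l0}\}$, which is exactly the cancellation your explicit expansion verifies by hand. (Only a cosmetic slip: $L_s h_{k-1}$ in your displayed formula should read $P_s L h_{k-1}=L_sP_s h_{k-1}$.)
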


\begin{proof}
Observe that we have contraction property: for all $g_s\in E_s$ and $\{h_i\}$, $\{\tilde h_i\}$ in $(L^2_{p+1})^{\mathbb{N}_0}$ with $\{h_i-\tilde h_i \} \in \ell _{\Lambda_-}$, there holds 
\bea \label{eq-Icontraction}  \vvvert I(g_s, \{h_i\})-I(g_s,\{ \tilde h_s \}) \vvvert _{\Lambda_-}\le  K \vvvert \{h_i\}-\{\tilde h _i \}\vvvert _{\Lambda_-}. \eea  
The proof proceeds very similar to the contraction estimate in Lemma \ref{thm-centermfd}. We skip the details.
This property shows $I_{g,g_s}$ maps $\ell _{\Lambda_-}$ into itself and it is a contraction mapping. Therefore, there is a unique fixed point to the operator.

Next, we refine above and show $I_{g,g_s}$ is  a map from $B_{\vvvert g_s \vvvert}$ into itself. Observe that 
\[I(g_s+P_s(g), \{S^i (g)\})- \{S^i(g)\} = \{ g_s \delta_{l0}\}_{l\in \mathbb{N}_0} .\]
Let $I_{g,g_s}= \{(I_{g,g_s})_i\}_{i\in \mathbb{N}_0}$. For $k\ge1$, it holds that
\bea  &\vvvert (I_{g,g_s})_k(\{h_i\})\vvvert=\vvvert I_k (g_s+P_s g, \{h_i\}+\{S^i(g)\}) - I(g_s+P_s g, \{S^i(g)\}) \vvvert   \\
&\le K \Lambda_-^k \vvvert \{h_i\} \vvvert_{\Lambda_-} \eea 
If $k=0$, we have \bea(I_{g,g_s})_0(\{h_i\}) = [I_0(g_s+P_s g, \{h_i \}+\{S^i(g)\}) - I_0(g_s+P_s g, \{S^i(g)\})] + g_s,\eea which provides the orthogonal decomposition in $L^2_{p+1}=E_c \oplus E_s$, and thus  
\bea \vvvert (I_{g,g_s})_0(\{h_i\})\vvvert &\le \max (K \vvvert \{h_i\}\vvvert_{\Lambda_-}, \vvvert g_s \vvvert ).
\eea  
In conclusion, by combining two cases, 
\be \vvvert I_{g,g_s}(\{h_k\})\vvvert_{\Lambda_-} \le \max (K\vvvert \{h_k\} \vvvert_{\Lambda_-}, \vvvert g_s\vvvert ), \ee 
showing $I_{g,g_s}$ maps  $B_{\vvvert g_s \vvvert}$ in $\ell_{\Lambda_-}$ to itself. Moreover, this implies the fixed point should lie in $B_{\vvvert g_s \vvvert }$. 
\end{proof}

Let $\{h_i\}\in \ell_{\Lambda_-}$ be the unique fixed point to the problem $I_{g,g_s}$. Then by construction, $\{h_i\}+\{S^i(g)\}$ is a semi-flow starting from $h_0+g$. Let us denote this map that assigns $\{h_i\}$ for each $g_s$ by $\Psi_g$. That is, $\Psi_g : E_s \to \ell_{\Lambda_-}$ is such that $\Psi_g (g_s)$ is the unique fixed point to $I_{g,g_s}$. 

\begin{lemma}\label{L21}
$ \Psi_g$ is a Lipschitz map from $(L^2_{p+1}\cap E_s, \vvvert \cdot \vvvert)$ to $(\ell _{\Lambda_- }, \vvvert \cdot \vvvert_{\Lambda_-})$ with $\mathrm{Lip}(\Theta)\le \frac1{1-K}$. 
\end{lemma}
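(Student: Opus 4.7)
My plan is to mirror the proof of Lemma \ref{lem-lipTheta}, with the operator $I_{g,g_s}$ from the stable-manifold construction taking the role of $J(h_c,\cdot)$ from the center-manifold construction. Fix $g_s,\tilde g_s \in E_s$ and set $\{h_k\} := \Psi_g(g_s)$ and $\{\tilde h_k\} := \Psi_g(\tilde g_s)$, the unique fixed points of $I_{g,g_s}$ and $I_{g,\tilde g_s}$ in $\ell_{\Lambda_-}$ produced by Lemma \ref{L10}.

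Using the fixed-point identities $\{h_k\} = I_{g,g_s}(\{h_k\})$ and $\{\tilde h_k\} = I_{g,\tilde g_s}(\{\tilde h_k\})$ and inserting an intermediate term, I would write
\[
\{h_k\}-\{\tilde h_k\} = \bigl[I_{g,g_s}(\{h_k\}) - I_{g,g_s}(\{\tilde h_k\})\bigr] + \bigl[I_{g,g_s}(\{\tilde h_k\}) - I_{g,\tilde g_s}(\{\tilde h_k\})\bigr].
\]
The first bracket is controlled by the contraction estimate \eqref{eq-Icontraction} established inside the proof of Lemma \ref{L10}, which bounds it by $K\vvvert \{h_k\}-\{\tilde h_k\}\vvvert_{\Lambda_-}$.

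For the second bracket I would unpack the definitions: since the translation $\{S^k(g)\}$ used in $I_{g,g_s}$ depends only on $g$, it cancels when differencing in $g_s$, so
\[
I_{g,g_s}(\{\tilde h_k\}) - I_{g,\tilde g_s}(\{\tilde h_k\}) = I\bigl(g_s + P_s g,\,\{\tilde h_k\}+\{S^k(g)\}\bigr) - I\bigl(\tilde g_s + P_s g,\,\{\tilde h_k\}+\{S^k(g)\}\bigr).
\]
Inspection of \eqref{eq-defI} shows that the first argument of $I$ appears only in the zeroth slot, and only linearly via $P_s(\cdot)$. All entries $k\ge 1$ therefore cancel, and the zeroth entry reduces to $P_s(g_s-\tilde g_s) = g_s - \tilde g_s$, since $g_s,\tilde g_s \in E_s$. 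The resulting one-term sequence $(g_s-\tilde g_s,0,0,\dots)$ has $\vvvert\cdot\vvvert_{\Lambda_-}$-norm equal to $\vvvert g_s-\tilde g_s\vvvert$.

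Assembling the two bounds and absorbing the $K$-term on the left yields $(1-K)\vvvert\{h_k\}-\{\tilde h_k\}\vvvert_{\Lambda_-} \le \vvvert g_s-\tilde g_s\vvvert$, which is the claimed Lipschitz bound $1/(1-K)$. The only mildly delicate point is the bookkeeping in the second bracket: one must carefully track the affine translation by $\{S^k(g)\}$ built into the very definition of $I_{g,g_s}$ to confirm it disappears upon taking the $g_s$-difference, so that only the bare linear dependence on $g_s$ in $I_0$ survives. Everything else is a formal repetition of the contraction-plus-triangle argument already used for the center manifold.
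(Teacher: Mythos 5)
Your proof is correct and follows exactly the route the paper intends: the paper's own proof of Lemma \ref{L21} simply defers to the triangle-inequality-plus-contraction argument of Lemma \ref{lem-lipTheta}, which is precisely your decomposition into $I_{g,g_s}(\{h_k\})-I_{g,g_s}(\{\tilde h_k\})$ (handled by the contraction estimate \eqref{eq-Icontraction}) and $I_{g,g_s}(\{\tilde h_k\})-I_{g,\tilde g_s}(\{\tilde h_k\})$ (which, as you verify, reduces to the sequence $(g_s-\tilde g_s,0,0,\dots)$ of norm $\vvvert g_s-\tilde g_s\vvvert$). Your careful bookkeeping of the affine translation by $\{S^k(g)\}$ is exactly the "by construction" detail the paper leaves implicit.
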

\begin{proof}
This follows simply by construction, similar to the Lipschitz continuity of $\Theta$ in Lemma \ref{lem-lipTheta}.
\end{proof}

We now define $\psi_g:E_s \to L^2_{p+1}$ as the time zero slice of $\Psi_g $ 
\[\psi_g (g_s) := (\Psi_g (g_s) )_0.\]By its construction, we obtain that $\mathrm{Lip}\, (\psi_g) \le \frac{1}{1-K}$. Again, we are able to provide a better bound.

\begin{lemma}\label{lem-lippsi} The mapping $\psi: (L^2_{p+1}\otimes 
E_s,  \vvvert \cdot \vvvert\otimes \vvvert \cdot \vvvert) \to (E_c\cap L^2_{p+1},\vvvert \cdot \vvvert)  $ given by $\psi^\eps (g,g_s):=\psi^\eps _g(g_s)$ is continuous and $\psi_g : (E_s,  \vvvert \cdot \vvvert) \to (E_c\cap L^2_{p+1},\vvvert \cdot \vvvert)  $  is uniformly Lipschitz  
with $\mathrm{Lip}(\psi_g) \lesssim \e _{gap}$.
\end{lemma}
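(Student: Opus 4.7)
The plan is to adapt the iteration scheme from Lemma~\ref{lem-liptheta} to the stable manifold setting, now running forward in time along the semiflow, in order to extract a factor of $\eps_{gap}$ in the Lipschitz constant of $\psi_g$ in $g_s$. Joint continuity of $(g, g_s) \mapsto \psi_g(g_s)$ falls out of the standard continuous parameter dependence of a contraction's fixed point: the operator $I_{g, g_s}$ in \eqref{eq-defI} is uniformly contractive on $\ell_{\Lambda_-}$ by \eqref{eq-Icontraction} with constant $K<1$, and jointly continuous in $(g, g_s)$ thanks to the stability estimate \eqref{eq-stability} and Lemma~\ref{lemma-R}, so its fixed point depends continuously on $(g, g_s)$ as well.

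To improve the Lipschitz bound in $g_s$, I would fix $g \in L^2_{p+1}$, set $\{h_k\} = \Psi_g(g_s)$ and $\{\tilde h_k\} = \Psi_g(\tilde g_s)$, and project the fixed-point identities $h_k + S^k g = I_k(g_s + P_s g, \{h_j + S^j g\})$ onto $E_c$. Using the $L$-invariance of $E_c$ together with the identity $L_c^{-1} P_c S g = P_c g + L_c^{-1} P_c R(g)$ to cancel off the reference orbit $\{S^k g\}$, one obtains for every $k \geq 0$ the backward recursion
\bee
P_c h_k = L_c^{-1}\bigl[P_c h_{k+1} - P_c\bigl(R(h_k + S^k g) - R(S^k g)\bigr)\bigr].
\eee
Subtracting the analogous identity for $\tilde h_k$ and applying the $\eps_{gap}$-Lipschitz bound \eqref{eq-R-eps-gap} on $R$ yields the key estimate
\bee
\vvvert P_c(h_k - \tilde h_k)\vvvert \leq \Lambda_c^{-1}\vvvert P_c(h_{k+1} - \tilde h_{k+1})\vvvert + \Lambda_c^{-1}\eps_{gap}\vvvert h_k - \tilde h_k\vvvert.
\eee

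Iterating this bound $k$ times and letting $k \to \infty$, the boundary term $\Lambda_c^{-k}\vvvert P_c(h_k - \tilde h_k)\vvvert$ vanishes because $\vvvert h_k - \tilde h_k\vvvert \leq \Lambda_-^k \vvvert \Psi_g(g_s) - \Psi_g(\tilde g_s)\vvvert _{\Lambda_-} \leq \Lambda_-^k (1-K)^{-1}\vvvert g_s - \tilde g_s\vvvert$ by Lemma~\ref{L21}, combined with $\Lambda_- < \Lambda_c$. The residual geometric sum $\sum_{j \geq 0} \Lambda_c^{-(j+1)}\Lambda_-^j = (\Lambda_c - \Lambda_-)^{-1}$ is finite, and collecting the pieces delivers $\vvvert P_c(h_0 - \tilde h_0)\vvvert \lesssim \eps_{gap}\vvvert g_s - \tilde g_s\vvvert$ with constant independent of $g$, as required. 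The main subtlety is the cancellation needed to disentangle the error orbit from the background $\{S^k g\}$ in the $E_c$-projected recursion: it is precisely the subtraction structure hardwired into the definition of $I_{g, g_s}$ that makes the differences $R(h_k + S^k g) - R(\tilde h_k + S^k g)$ collapse to $\eps_{gap}\vvvert h_k - \tilde h_k\vvvert$ uniformly in $g$, and without this uniformity the assertion $\mathrm{Lip}(\psi_g) \lesssim \eps_{gap}$ independent of the base point would not be within reach.
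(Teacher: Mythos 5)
Your proof of the Lipschitz bound $\mathrm{Lip}(\psi_g)\lesssim \eps_{gap}$ is correct and is essentially the paper's argument: the same $E_c$-projected backward recursion $\vvvert P_c(h_k-\tilde h_k)\vvvert\le \Lambda_c^{-1}\vvvert P_c(h_{k+1}-\tilde h_{k+1})\vvvert+\Lambda_c^{-1}\eps_{gap}\vvvert h_k-\tilde h_k\vvvert$, the same vanishing of the boundary term via $\Lambda_-<\Lambda_c$ and Lemma \ref{L21}, and the same geometric sum producing the factor $\eps_{gap}/(\Lambda_c-\Lambda_-)$.

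The continuity part, however, has a genuine gap. You invoke ``standard continuous parameter dependence of a contraction's fixed point,'' which requires that $g\mapsto I_{g,g_s}(\{h_i\})$ be continuous into $\ell_{\Lambda_-}$ (at least at the relevant fixed point). This is exactly what the available estimates do not give. Writing out the $k$-th component of $I_{g,g_s}(\{h_i\})-I_{g',g_s}(\{h_i\})$, after the cancellation of the reference orbits one is left with differences of the form $R(h_{k}+S^{k}g)-R(S^{k}g)-R(h_{k}+S^{k}g')+R(S^{k}g')$. Pairing the terms one way gives a bound $\lesssim \eps_{gap}\vvvert S^{k}g-S^{k}g'\vvvert$, and the stability estimate \eqref{eq-stability} only controls this by $e^{Ck}\Vert g-g'\Vert$ with a constant $C$ that has nothing to do with $\lambda_-$; pairing them the other way gives $\lesssim\eps_{gap}\vvvert h_{k}\vvvert\lesssim \eps_{gap}\Lambda_-^{k}\vvvert\{h_i\}\vvvert_{\Lambda_-}$, which is uniform in $g'$ but does not vanish as $g'\to g$. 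After multiplying by the weight $\Lambda_-^{-k}$ and taking the supremum over $k$, the tail $k\to\infty$ contributes a quantity of order $\eps_{gap}\vvvert\{h_i\}\vvvert_{\Lambda_-}$ no matter how small $\Vert g-g'\Vert$ is, so continuity of $I_{\cdot,g_s}$ in the $\ell_{\Lambda_-}$ topology is not established and the fixed-point continuity theorem cannot be applied as stated. The paper circumvents precisely this difficulty by a different route: since $\psi_{g_i}(q_s)$ is bounded and $E_c$ is finite-dimensional, one extracts a convergent subsequence $\psi_{g_{n_i}}(q_s)\to q_c$, passes to the limit in the decay inequality $\vvvert S^k(g_{n_i})-S^k(g_{n_i}+q_s+\psi_{g_{n_i}}(q_s))\vvvert\le\Lambda_-^k\vvvert q_s+\psi_{g_{n_i}}(q_s)\vvvert$ for each \emph{fixed} $k$ (where the well-posedness of $S^k$ suffices), and then identifies $q_c=\psi_g(q_s)$ via the characterization of $M_g$ in Lemma \ref{L24}. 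You would need to either reproduce that compactness argument or supply a uniform-in-$k$ continuity estimate that the truncated flow is not known to satisfy.
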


\begin{proof}The argument for Lipschitz continuity is similar to the one of Lemma \ref{lem-liptheta}.
Let $\{h_k\}= \Psi_g(g_s)$ and $\{\tilde h _k \} = \Psi_g (\tilde g_s)$ be fixed points of $I_{g,g_s}$ and $I_{g,\tilde g_s}$, respectively. By the definition of $I$ in \eqref{eq-defI}, 
\[\vvvert P_c(\tilde h_k - h_k ) \vvvert \le \frac{1}{\Lambda_c} \vvvert P_c (\tilde h_{k+1}- h_{k+1})\vvvert + \frac{\e _{gap}}{\Lambda_c} \vvvert \tilde h_k - h_k \vvvert .\] By repeating the above $m$ times from $k=0$, for each $m\ge1$, we have 
\bea \vvvert P_c(\tilde h_0 - h_0 )\vvvert &\le \frac{1}{\Lambda_c ^m } \vvvert P_c (\tilde h_m - h_m )\vvvert + \sum_{k=0}^{m-1} \frac{\e _{gap}}{\Lambda_c ^{k+1}} \vvvert \tilde h_k - h_k \vvvert\\
&\le  \left[ \frac{\Lambda_-^m}{\Lambda_c ^m } + \sum_{k=0}^{m-1} \frac{\e_{gap}\Lambda^k_- }{\Lambda^{k+1}_c} \right ]\vvvert \{\tilde h_k\}-\{ h_k \} \vvvert_{\Lambda_-} .
\eea
The right-hand side converges as $m$ tends to infinity, and we find
\[
\vvvert P_c(\tilde h_0 - h_0 )\vvvert  \le   \frac{\e _{gap}}{\Lambda_c - \Lambda_-} \vvvert \{\tilde h_k\}-\{ h_k \} \vvvert_{\Lambda_-}  .
\]
Since $\mathrm{Lip}\, (\Psi_g)\le \frac{1}{1-K}$, this shows $\mathrm{Lip}\, (\psi_g) \le \frac{\e _{gap}}{(1-K)(\Lambda_c - \Lambda_-)}.$

Next, the continuity of $\psi(\cdot,\cdot)$ follows if we show the continuity in $g$ for each fixed $g_s$ (since $\psi(g,g_s)$ is uniformly Lipschitz in $g_s$). This is a consequence of the well-posedness of the time-one-map $S$ and the characterization of the stable manifold $M_g$. More precisely, suppose $g_i \to g$ in $L^2_{p+1}$ and $q_s\in E_s$ is fixed. Since $\Vert \psi_{g_i}(q_s) \Vert_{L^2_{p+1}} \le  \frac{1}{1-K} \Vert q_s\Vert_{L^2_{p+1}} $ and $E_c$ is finite dimensional, every subsequence of $g_i$ has a further subsequence $g_{n_j}$ so that $\psi_{g_{n_i}}(q_s)=\psi(g_{n_i},q_s)$   converges strongly in $L^2_{p+1}$ to some $q_c\in E_c $. By the well-posedness of $S$, for each integer $k\ge0$, \[S^k (g_{n_i})\to S^k(g) \text{ and }S^k(g_{n_i}+g_s+\psi_{g_{n_i}}(q_s))\to S^k (g+q_s+q_c), \]in $L^2_{p+1}$. Thus the inequality  
\[\vvvert S^k (g_{n_i}) - S^k(g_{n_i} +q_s +\psi_{g_n{i}}(q_s))\vvvert  \le \Lambda_-^k \vvvert q_s +\psi_{g_{n_i}}(q_s)\vvvert \]
descends to 
\[\vvvert S^k (g) -S^k(g+q_s+q_c) \vvvert \le \Lambda^k_- \vvvert q_s+q_c \vvvert. \]
Finally by the characterization of $M_{g}$, this implies $q_c = \psi_{g}(q_s)$, i.e., $\psi(g_i,q_s )\to \psi(g,q_s)$ as $i \to \infty$. 
\end{proof}

With these preparations, we can define the stable manifold that has the desired properties.

\begin{definition}[Stable manifold] The stable manifold with respect to an element $g\in L^2_{p+1}$ is the translated graph of $\psi_g$ over the stable subspace, defined by
\[M^\eps _g := g+ \{g_s + \psi^\e _g (g_s)\,:\, g_s \in E_s \}.
\] 
\end{definition}

We provide an important characterization.

\begin{lemma}\label{L24}
$M^\eps _g$ can be characterized as 
\bea \label{eq-characterizationstable} M^\eps _g &= \{ \tilde g \in L^2_{p+1}\,:\, \sup _{k\in \mathbb{N}_0}\Lambda_- ^{-k} \vvvert S^k  (g) - S^k (\tilde g) \vvvert\le \vvvert P_s(g- \tilde g) \vvvert  \}\\ 
& = \{ \tilde g \in L^2_{p+1}\,:\, \sup _{k\in \mathbb{N}_0}\Lambda_- ^{-k} \vvvert S^k  (g) - S^k (\tilde g) \vvvert<\infty  \}.
\eea 
\end{lemma}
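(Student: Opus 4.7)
The plan is to prove the two identities by the circular chain of inclusions
\[
M^\eps_g \;\subset\; A \;\subset\; B \;\subset\; M^\eps_g,
\]
where $A$ denotes the first set on the right-hand side of \eqref{eq-characterizationstable} and $B$ the second. The containment $A\subset B$ is trivial from the definitions, so the two substantive points are \textbf{(i)} $M^\eps_g\subset A$ and \textbf{(ii)} $B\subset M^\eps_g$. Both rest on the same conceptual input: the orbit-difference sequences $\{h_k\}=\{S^k\tilde g-S^k g\}_{k\ge 0}$ are \emph{precisely} the fixed points of the operator $I_{g,g_s}$ constructed in Lemma~\ref{L10}.

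For step (i), I would take $\tilde g=g+g_s+\psi_g(g_s)\in M^\eps_g$ and set $h_k=S^k(\tilde g)-S^k(g)$. Using the nonlinear splitting $S=L+R$ and the commutation $P_cL=L_cP_c$ (which holds since $P_c$ projects onto an invariant subspace of $\mathcal L$), a direct computation shows $h_k=(I_{g,g_s})_k(\{h_k\})$ for every $k\ge 0$, with the choice $g_s=P_s(\tilde g-g)$. Since $\psi_g(g_s)$ is by definition the zeroth slice of the fixed point $\Psi_g(g_s)\in\ell_{\Lambda_-}$, uniqueness of the fixed point of $I_{g,g_s}$ in $\ell_{\Lambda_-}$ (Lemma~\ref{L10}) forces $\{h_k\}=\Psi_g(g_s)$, and the bound $\vvvert\Psi_g(g_s)\vvvert_{\Lambda_-}\le\vvvert g_s\vvvert$ supplied by Lemma~\ref{L10} is exactly the defining estimate of $A$.

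For step (ii), given $\tilde g\in B$, set $h_k=S^k(\tilde g)-S^k(g)$ and $g_s=P_s(\tilde g-g)$. The hypothesis $\tilde g\in B$ means $\{h_k\}\in\ell_{\Lambda_-}$, and the same algebraic verification as in step (i) shows $\{h_k\}$ solves the fixed-point system for $I_{g,g_s}$. Invoking uniqueness once more yields $\{h_k\}=\Psi_g(g_s)$, so that $h_0=\psi_g(g_s)$ and $\tilde g\in M^\eps_g$, closing the chain.

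The main obstacle is the algebraic verification that $\{S^k\tilde g-S^kg\}$ solves \eqref{eq-defI}: the $k=0$ slot is asymmetric from $k\ge 1$ because it carries the boundary term $P_s(g_s+P_sg)$ rather than $P_sS(h_{k-1})$, so one must split $h_0$ into its $E_c$- and $E_s$-components and keep track of the anchoring identity $P_sh_0=g_s$. Beyond this bookkeeping, which uses only $S=L+R$, $L=L_c+L_s$, and $P_cL=L_cP_c$, no new estimates are required; the whole argument is essentially a uniqueness statement applied to an algebraically identified fixed point.
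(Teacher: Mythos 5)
Your proposal is correct and follows essentially the same route as the paper: the inclusion $M^\eps_g\subset A$ comes from the construction of $\psi_g$ together with the bound $\vvvert\Psi_g(g_s)\vvvert_{\Lambda_-}\le\vvvert g_s\vvvert$ of Lemma \ref{L10}, $A\subset B$ is trivial, and $B\subset M^\eps_g$ follows by recognizing the orbit-difference sequence as a fixed point of $I_{g,g_s}$ and invoking uniqueness. The only caveat is notational: at the end of step (ii) the conclusion should read $P_c h_0=\psi_g(g_s)$ (equivalently $h_0=g_s+\psi_g(g_s)$), since $\psi_g$ takes values in $E_c$.
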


\begin{proof}
First, if $\tilde g \in M_g$ is given, then $ \tilde g = g+ g_s +\psi_g(g_s)$ for some $g_s \in E_s$ and in view of the  construction of $\psi_g$, it holds
\[\vvvert \{S^k (\tilde g)\}  - \{S^k (g) \} \vvvert \le \vvvert g_s \vvvert = \vvvert P_s(\tilde g -g) \vvvert . \] 
Conversely, if  $\tilde g=(g + q_s)+q_c$ for some $q_s \in E_s$ and $q_c \in E_c$ satisfies
\[\vvvert \{ S^k(\tilde g)\} - \{  S^k(g) \}\vvvert_{\Lambda_-} < \infty ,\] then it is direct to check that $\{h_k\}=\{S^k (\tilde g) \} - \{S^k(g)\} \in \ell _{\Lambda_-}$ is a fixed point of $I_{g,q_s}$ in $\ell _{\Lambda_-}$, which is unique. Thus $q_c=\psi_g(q_s)$ and $\vvvert \{h_k\}\vvvert_{\Lambda_-}\le \vvvert q_s\vvvert=\vvvert P_s(\tilde g -g)\vvvert .$
\end{proof}

We finally have to show that the stable manifold constitutes a foliation over the center manifold.

\begin{lemma}\label{L22}
 If $\eps_{gap}>0$ is sufficiently small, then $M^\eps_g $ and the center manifold $W^\eps _c$ have a unique intersection point, i.e., $\{M^\eps _g\}_{g\in L^2_{p+1}}$ is a foliation of $L^2_{p+1}$ over $W^\eps _c$. 
\end{lemma}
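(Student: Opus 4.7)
The plan is to translate the intersection problem $M^\eps_g\cap W^\eps_c$ into a fixed point equation on the finite-dimensional space $E_c$, and then invoke the quadratic smallness of the product of Lipschitz constants of $\theta$ and $\psi_g$.

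A point $h$ lies in $W^\eps_c$ if and only if $h=f+\theta(f)$ for some $f\in E_c$, while $h\in M^\eps_g$ if and only if $h=g+g_s+\psi_g(g_s)$ for some $g_s\in E_s$. Equating the two representations and projecting onto $E_c$ and $E_s$ separately (using $P_c+P_s=I$, $\theta(f)\in E_s$, $\psi_g(g_s)\in E_c$) I obtain the coupled system
\[ f=P_cg+\psi_g(g_s),\qquad \theta(f)=P_sg+g_s. \]
The second equation solves immediately for $g_s=\theta(f)-P_sg$. Substituting this into the first reduces everything to a single fixed point equation on $E_c$:
\[ f=F(f):=P_cg+\psi_g\bigl(\theta(f)-P_sg\bigr). \]

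Now I plan to show $F$ is a strict contraction. By Lemma \ref{lem-liptheta}, $\mathrm{Lip}(\theta)\lesssim \e_{gap}$, and by Lemma \ref{lem-lippsi}, $\mathrm{Lip}(\psi_g)\lesssim \e_{gap}$ uniformly in $g$. Composing, for any $f_1,f_2\in E_c$,
\[ \vvvert F(f_1)-F(f_2)\vvvert \le \mathrm{Lip}(\psi_g)\cdot\mathrm{Lip}(\theta)\cdot\vvvert f_1-f_2\vvvert \lesssim \e_{gap}^2\,\vvvert f_1-f_2\vvvert. \]
For $\e_{gap}$ sufficiently small, $F:E_c\to E_c$ is a contraction on the finite-dimensional, hence complete, Banach space $E_c$. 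Banach's fixed point theorem then yields a unique $f^*\in E_c$, and I would verify by direct substitution that $h^*:=f^*+\theta(f^*)$ indeed lies in $M^\eps_g$ (using $g_s^*:=\theta(f^*)-P_sg$ together with $\psi_g(g_s^*)=f^*-P_cg$ from the fixed point relation). This establishes existence and uniqueness of the intersection $W^\eps_c\cap M^\eps_g$ as a single point.

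The foliation conclusion is essentially a corollary of Lemma \ref{L24}: the relation $\tilde g\sim g\iff \sup_{k\in\mathbb{N}_0}\Lambda_-^{-k}\vvvert S^k(g)-S^k(\tilde g)\vvvert<\infty$ is clearly reflexive and symmetric, and transitive by the triangle inequality for $\vvvert\cdot\vvvert$ combined with $\Lambda_-<1$. Hence the leaves $\{M^\eps_g\}_{g\in L^2_{p+1}}$ form a partition of $L^2_{p+1}$, and the previous step shows each leaf meets $W^\eps_c$ in exactly one point, which is the definition of a foliation over $W^\eps_c$. I do not anticipate a serious obstacle; the only item to be careful about is that the smallness of $\e_{gap}$ required here (to beat both Lipschitz constants) is compatible with — in fact weaker than — the threshold already fixed for the constructions of $\theta$ and $\psi_g$, so no new smallness assumption on $\e$ is introduced.
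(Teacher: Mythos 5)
Your proposal is correct and is essentially the paper's argument: the paper likewise reduces the intersection to a fixed point for the composition of $\theta$ and $\psi_g$ (posed on $E_s$ via $\chi(q_s)=\theta(\psi_g(q_s-P_sg)+P_cg)$ rather than on $E_c$ as you do, a purely cosmetic difference) and concludes by contraction since $\mathrm{Lip}(\theta)\cdot\mathrm{Lip}(\psi_g)\lesssim \e_{gap}^2$. Your additional remark that Lemma \ref{L24} makes membership in a common stable leaf an equivalence relation, so that the leaves genuinely partition $L^2_{p+1}$, is a correct and slightly more complete justification of the foliation claim than the paper offers.
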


\begin{proof}
We start by noting that by the definition of $W_c^\e $ and $M_g$, a point  $q = q_s+ q_c$ in $L^2_{p+1}$ is an intersection point if and only if $q_s \in E_s$ is a fixed point of the mapping $\chi:E_s\to E_s $ defined by
\[\chi (q_s):= \theta (\psi_g  (q_s -P_s g ) +P_c g).\]  
Since both $\theta$ and $\psi_g$ are Lipschitz continuous functions with constants of the order $\eps_{gap}$ by the virtue of Lemmas \ref{lem-liptheta} and \ref{lem-lippsi}, it follows that $\chi$ is Lipschitz with $\mathrm{Lip}\, (\chi ) \le C^2 \e ^2_{gap}$. Therefore $\chi$ is a contraction and has thus   a unique fixed point for sufficiently small $\e_{eps}$.
\end{proof}

\subsection{Proof of main theorem}
By combining the invariant manifold constructions obtained above, we now prove that Theorem \ref{thm-finiteapprox}  shows  that the center manifold $W^\eps_c$  of the truncated equation becomes a finite $K$-dimensional approximation of solutions to the original equation. 


\begin{proof}[Proof of Theorem \ref{thm-finiteapprox}]
We choose $\eps_{gap}$ and $\eps$ so that all hypotheses of the previous lemmas and of Theorem \ref{thm-truncated} are satisfied.

Let  $h(t)$ be a given solution to the original equation \eqref{eq-relativeerror}. By Lemma \ref{L11}, $h(t)$ becomes a solution to the $\eps$-truncated equations. By a time translation,   we may without loss of generality assume that this holds for $t\ge 0$.  

Thanks to Lemma \ref{L22} there exist a unique point $\tilde h_0$ in the intersection $ W^\eps_c  \cap M_{h_0}$. Because $W_c^{\eps}$ is invariant by Lemma \ref{L23}, the approximate $\tilde h(t)$ given by Theorem \ref{thm-truncated} remains on that manifold (initially for discrete times but it can be continuously extended to all positive times). In view of the characterization of Lemma \ref{L24} it follows that $\tilde h(t)$ approximates the original solution $h(t)$
in the sense that 
\[ \vvvert \tilde h_\eps (t) - h(t) \vvvert  \le  \Lambda_-^t \vvvert \tilde h_\eps(0)- h(0) \vvvert  \quad \text{ for all } t\in \mathbb{N}_0.\]  Next, since the solution to the truncated equation \eqref{eq-trunc-eq} depends continuously on the initial datum with respect to the Hilbert space topology (see \eqref{eq-stability} in Theorem \ref{thm-truncated}), this extends to all times,
\[ \vvvert \tilde h_\eps (t) - h(t) \vvvert  \le C  e^{-\lambda_-t }  \vvvert \tilde h_\eps(0)- h(0) \vvvert  \quad \text{ for all } t\in [0,T].\] 
This finishes the proof.

\end{proof}

\section*{Acknowledgments}
The first author has been partially supported by  National Research Foundation of Korea grants No. 2022R1C1C1013511 and No. RS-2023-00219980 funded by the Korea government(MSIT), and POSTECH Basic Science Research Institute grant No. 2021R1A6A1A10042944.
The work of the second author is funded by the Deutsche Forschungsgemeinschaft (DFG, German Research Foundation) under Germany's Excellence Strategy EXC 2044 --390685587, Mathematics M\"unster: Dynamics--Geometry--Structure. Both authors acknowledge support by the DFG through the international collaboration grant 493768209. The authors thank the anonymous referees who kindly reviewed and provided valuable suggestions.

\bibliography{fastdiffusion}

\begin{thebibliography}{10}

\bibitem{Akagi21+}
G.~Akagi.
\newblock Rates of convergence to non-degenerate asymptotic profiles for fast
  diffusion via energy methods.
\newblock {\em Arch. Ration. Mech. Anal.}, 247(2):Paper No. 23, 38, 2023.

\bibitem{BerrymanHolland80}
J.~G. Berryman and C.~J. Holland.
\newblock Stability of the separable solution for fast diffusion.
\newblock {\em Arch. Rational Mech. Anal.}, 74(4):379--388, 1980.

\bibitem{bonforte2021sharp}
M.~Bonforte and A.~Figalli.
\newblock Sharp extinction rates for fast diffusion equations on generic
  bounded domains.
\newblock {\em Communications on Pure and Applied Mathematics}, 74(4):744--789,
  2021.

\bibitem{BonforteFigalli24}
M.~Bonforte and A.~Figalli.
\newblock The {C}auchy-{D}irichlet problem for the fast diffusion equation on
  bounded domains.
\newblock {\em Nonlinear Anal.}, 239:Paper No. 113394, 55, 2024.

\bibitem{BonforteGrilloVazquez12}
M.~Bonforte, G.~Grillo, and J.~L. Vazquez.
\newblock Behaviour near extinction for the {F}ast {D}iffusion {E}quation on
  bounded domains.
\newblock {\em J. Math. Pures Appl. (9)}, 97(1):1--38, 2012.

\bibitem{BonforteGrilloVazquez13}
M.~Bonforte, G.~Grillo, and J.~L. Vazquez.
\newblock Quantitative bounds for subcritical semilinear elliptic equations.
\newblock In {\em Recent trends in nonlinear partial differential equations.
  {II}. {S}tationary problems}, volume 595 of {\em Contemp. Math.}, pages
  63--89. Amer. Math. Soc., Providence, RI, 2013.

\bibitem{choi2023asymptotics}
B.~Choi and P.-K. Hung.
\newblock Asymptotics for slowly converging evolution equations.
\newblock {\em arXiv preprint arXiv:2304.02254}, 2023.

\bibitem{ChoiMcCannSeis22}
B.~Choi, R.~J. McCann, and C.~Seis.
\newblock Asymptotics near extinction for nonlinear fast diffusion on a bounded
  domain.
\newblock {\em Arch. Ration. Mech. Anal.}, 247(2):Paper No. 16, 48, 2023.

\bibitem{ChoiHaslhoferHershkovits18+}
K.~Choi, R.~Haslhofer, and O.~Hershkovits.
\newblock Ancient low-entropy flows, mean-convex neighborhoods, and uniqueness.
\newblock {\em Acta Math.}, 228(2):217--301, 2022.

\bibitem{DiBenedettoKwongVespri91}
E.~DiBenedetto, Y.~Kwong, and V.~Vespri.
\newblock Local space-analyticity of solutions of certain singular parabolic
  equations.
\newblock {\em Indiana Univ. Math. J.}, 40(2):741--765, 1991.

\bibitem{FeireislSimondon00}
E.~Feireisl and F.~Simondon.
\newblock Convergence for semilinear degenerate parabolic equations in several
  space dimensions.
\newblock {\em J. Dynam. Differential Equations}, 12(3):647--673, 2000.

\bibitem{GidasNiNirenberg79}
B.~Gidas, W.~M. Ni, and L.~Nirenberg.
\newblock Symmetry and related properties via the maximum principle.
\newblock {\em Comm. Math. Phys.}, 68(3):209--243, 1979.

\bibitem{JinXiong22+}
T.~Jin and J.~Xiong.
\newblock Regularity of solutions to the dirichlet problem for fast diffusion
  equations.
\newblock {\em arXiv preprint arXiv:2201.10091}, 2022+.

\bibitem{JinXiong20+}
T.~Jin and J.~Xiong.
\newblock Bubbling and extinction for some fast diffusion equations in bounded
  domains.
\newblock {\em Trans. Amer. Math. Soc. Ser. B}, 10:1287--1332, 2023.

\bibitem{Koch97}
H.~Koch.
\newblock On center manifolds.
\newblock {\em Nonlinear Anal.}, 28(7):1227--1248, 1997.

\bibitem{MerleZaag98}
F.~Merle and H.~Zaag.
\newblock Optimal estimates for blowup rate and behavior for nonlinear heat
  equations.
\newblock {\em Comm. Pure Appl. Math.}, 51(2):139--196, 1998.

\bibitem{Sabinina62}
E.~S. Sabinina.
\newblock On a class of non-linear degenerate parabolic equations.
\newblock {\em Dokl. Akad. Nauk SSSR}, 143:794--797, 1962.

\bibitem{Sabinina65}
E.~S. Sabinina.
\newblock On a class of quasilinear parabolic equations, not solvable for the
  time derivative.
\newblock {\em Sibirsk. Mat. \v{Z}.}, 6:1074--1100, 1965.

\bibitem{SautTemam79}
J.-C. Saut and R.~Temam.
\newblock Generic properties of nonlinear boundary value problems.
\newblock {\em Comm. Partial Differential Equations}, 4(3):293--319, 1979.

\bibitem{Seis15+}
C.~Seis.
\newblock Invariant manifolds for the porous medium equation.
\newblock {\em {\rm Preprint at} https://arxiv.org/abs/1505.06657}, 2015.

\bibitem{SeisWinkler22+}
C.~Seis and D.~Winkler.
\newblock Invariant manifolds for the thin film equation, 2022.

\end{thebibliography}
\bibliographystyle{abbrv}

\end{document}